\newtheorem{theorem}{Theorem}[section]
\newtheorem*{thm57}{Theorem 5.7}
\newtheorem{lemma}[theorem]{Lemma}
\newtheorem{corollary}[theorem]{Corollary}
\theoremstyle{definition}
\newtheorem{definition}[theorem]{Definition}
\newtheorem{remark}[theorem]{Remark}
\newcommand{\N}{\mathbb{N}}
\newcommand{\Z}{\mathbb{Z}}
\newcommand{\R}{\mathbb{R}}
\newcommand{\C}{\mathbb{C}}
\newcommand{\K}{\mathbb{K}}
\newcommand{\Heven}[2]{H^{\text{even}}(#1, #2)}
\newcommand{\wh}{\widehat}
\newcommand{\mcal}{\mathcal}
\newcommand{\Lspin}{L_{\text{spin}}}
\newcommand{\Endo}[1]{\text{\rm End}(#1)}
\newcommand{\Homom}[2]{\text{Hom}(#1,#2)}
\newcommand{\bgend}[1]{\text{end}(#1)}
\newcommand{\trivial}[1]{\underline{#1}}
\newcommand{\id}{\text{id}}
\newcommand{\para}[1]{\mathcal{P}_{#1}}
\newcommand{\pEndo}[2]{\mathcal{P}(#1,#2)}
\newcommand{\hol}[2]{{\rm hol}(#1,#2)}
\newcommand{\Cmax}[2]{C^*_{\rm max}(#1 \to #2)}
\newcommand{\wtM}{\widetilde{M}}
\newcommand{\Vmax}{\mathcal{V}_{\rm max}}
\newcommand{\Vred}{\mathcal{V}_{\rm red}}
\DeclareMathOperator{\ch}{{\rm ch}}
\title[Twisted $K$-theory and obstructions against psc metrics]{Twisted $K$-theory and \\ obstructions against positive scalar curvature metrics}
\author{Ulrich Pennig}
\date{\today}
\address{Mathematisches Institut,\ \ Westf\"alische Wilhelms-Universit\"at M\"unster,\ \ Einstein\-stra\ss e~62, 48149 M\"unster, Germany, \textit{\small E-mail: u.pennig@uni-muenster.de}}
\begin{document}
\begin{abstract}
We decompose the twisted index obstruction $\theta(M)$ against positive scalar curvature metrics for oriented manifolds with spin universal cover into a pairing of a twisted $K$-homology with a twisted $K$-theory class and prove that $\theta(M)$ does not vanish if $M$ is an orientable enlargeable manifold with spin universal cover.
\end{abstract}

\maketitle

\section{Introduction}
In \cite{paper:RosenbergPositiveIII} Rosenberg constructed an index obstruction $\alpha(M) \in KO_n(C^*_{\R}(\pi_1(M)))$ for closed spin manifolds $M$ of dimension $n$, which vanishes if $M$ admits a metric of positive scalar curvature. It takes values in the $K$-theory of the (maximal or reduced) real group $C^*$-algebra associated to the fundamental group and relies on the existence of a spin structure. Gromov, Lawson and Rosenberg conjectured that $\alpha(M)$ is the only obstruction to the existence of a psc metric if  $\dim(M) \geq 5$. This was proven to be true in the simply-connected case by Stolz \cite{paper:StolzSimplyConn}, but is false in general as was shown by Schick \cite{paper:SchickCounter}.

Stolz generalized $\alpha(M)$ to the case, where $M$ itself may not be spin, but its universal cover $\wtM$ still is \cite{paper:StolzConcordance}, \cite[Theorem 1.7]{paper:RosenbergProgressRep}. The new invariant $\theta(M) \in KO(C^* \gamma)$ takes values in the $K$-theory of a real $C^*$-algebra associated to a \emph{twisted} version of the fundamental group accounting for the missing spin structure on $M$. The latter is a $\Z/2\Z$-extension $\wh{\pi}$ of $\pi=\pi_1(M)$, therefore we will use the notation $C^*(\wh{\pi} \to \pi)$ instead of $C^*\gamma$.

The element $\alpha(M)$ can be expressed as the pairing of the Dirac class $[D] \in KO_n(M)$ with the $KO$-theory class $[\mcal{V}] \in KO_0(C(M, C^*\pi))$ of the Mishchenko-Fomenko bundle. We show that the same is true for $\theta(M)$ if one switches to the \emph{twisted} versions of $K$-homology and $K$-theory \cite{paper:DonovanKaroubi, paper:AtiyahSegal1, paper:KaroubiOldAndNew}. For simplicity we treat the complex analogues of the real invariants and refer to \cite[Section 1.4]{paper:HankeSchick} for a nice exposition on how the different indices are related. We identify $\theta(M)$ as the pairing of the twisted fundamental $K$-homology class $[D^S] \in KK(C(M,\C\ell(M)), \C)$ with a twisted version of the Mishchenko-Fomenko bundle representing an element $[\mathcal{V}^S] \in KK(\C, C(M,\C\ell(M)) \otimes C^*(\wh{\pi} \to \pi))$:
\[
	\theta(M) = {\rm ind}(D_+^{\mathcal{V}}) = [\mathcal{V}^{S}] \otimes_{C(M,\C\ell(M))} [D^{S}]\ .
\]
The Dixmier-Douady class of the twist in this case is the element $W_3(M) := \beta(w_2(M)) \in H^3(M; \Z)$, i.e.\ the Bockstein of the second Stiefel-Whitney class. The superscript $S$ denotes the twisted spinor bundle over $M$. We will use the language of bundle gerbes developed by Murray \cite{paper:BundleGerbes} and the $C^*$-algebraic versions of their modules \cite{paper:KTheoryBGM} to obtain a geometric description of the twisted $K$-group with coefficients in a $C^*$-algebra. Most features of index theory are preserved in the twisted case: Murray and Singer proved the analogue of the Atiyah-Singer index theorem in this context \cite{paper:GerbesAndIndexThms} and Carey and Wang proved the Thom isomorphism~\cite{paper:Wang} (see also \cite{paper:DonovanKaroubi}). 

As an application of our techniques, we show that $\theta(M)$ does not vanish for orientable enlargeable manifolds (Definition \ref{def:enlargeable}) with spin universal cover and thereby enhance a result of Hanke and Schick \cite{paper:HankeSchick, paper:HankeSchickInfinite}: 
\begin{thm57}
Let $M$ be a closed compact smooth orientable even-dimensional manifold with $\dim(M) \geq 3$ and $\wtM$ spin that is enlargeable in the sense of Definition~\ref{def:enlargeable}. Then we have
\[
	\theta^{\rm max}(M) \neq 0 \ .
\]
\end{thm57}
Gromov and Lawson showed that enlargeable manifolds $M$ do not allow a metric of positive scalar curvature. They worked with finite covers in the definition of enlargeability in \cite{paper:GromovLawsonFund}, but later generalized to infinite ones \cite{paper:GromovLawsonIHES83}. We also allow the covers $\bar{M} \to M$ to be \emph{non-compact} as in \cite{paper:HankeSchickInfinite}. In this generality, transfer arguments fail, but twisted Hilbert $A$-module bundles, provide a way to circumvent this. This result is independent of the injectivity of the twisted Baum-Connes map. \\
\paragraph{\bf Acknowledgments}
The paper is an excerpt of the author's PhD thesis under supervision of Prof. Thomas Schick to whom the author would like to express his deep gratitude. Apart from that, he would like to thank Bernhard Hanke and Andreas Thom for many stimulating discussions, Max Karoubi for pointing out valuable references about twisted K-theory and the German National Academic Foundation for financial support.\\

\paragraph{\bf Notation}
Throughout the paper, $M$ will denote a smooth compact closed orientable manifold and $A$ will be a unital $C^*$-algebra if not stated otherwise. Moreover, $\underline{F}$ will denote a trivial bundle with fiber $F$ if the base space is clear. Whenever we have a surjective submersion $Y \to M$, the notation $Y^{[n]}$ will denote the $n$-fold fiber product over $M$.

\section{Twisted Hilbert $A$-module bundles}
In this section we will discuss a way to represent classes in twisted $K$-theory with coefficients in a $C^*$-algebra $A$ by twisted Hilbert $A$-module bundles. These are straightforward generalizations of bundle gerbe modules \cite{paper:KTheoryBGM}. For an intro\-duction to Hilbert $A$-modules we refer the reader to \cite{book:Lance}. We start by reviewing the geometric description of $2$-cocycles via bundle gerbes \cite{paper:BundleGerbes, paper:IntroToBGs}. 

\begin{definition}
Let $M$ be a smooth manifold and let $Y \to M$ be a surjective submersion. A \emph{real} line bundle $L \to Y^{[2]}$ will be called a \emph{$\Z/2\Z$-bundle gerbe} (or simply \emph{bundle gerbe} for short) if there exists a multiplication over $Y^{[3]}$, i.e.\ an isomorphism of line bundles
\(
	\mu \colon \pi_{12}^* L \otimes \pi_{23}^* L \to \pi_{13}^* L
\),
where $\pi_{ij} \colon Y^{[3]} \to Y^{[2]}$ denotes the canonical projections to the
fiber product of the $i$th and $j$th factor, and such that over $Y^{[4]}$ the following
diagram commutes:
\begin{center}
\begin{tikzpicture}
\matrix (m) [matrix of math nodes, row sep=1cm, column sep=0.3cm, 
             text height=1.5ex, text depth = 0.25ex]
{ (\pi_{12}^* L \otimes \pi_{23}^* L) \otimes \pi_{34}^* L & & \pi_{12}^* L \otimes (\pi_{23}^* L \otimes \pi_{34}^* L) \\
\pi_{13}^* L \otimes \pi_{34}^* L & \pi_{14}^* L & \pi_{12}^* L \otimes \pi_{24}^* L \\};
\path[->,font=\scriptsize]
(m-1-1) edge node[left] {$\mu \otimes \text{id}$} (m-2-1)
(m-1-3) edge node[auto] {$\text{id} \otimes \mu$} (m-2-3)
(m-2-1) edge node[below] {$\mu$} (m-2-2)
(m-2-3) edge node[auto] {$\mu$} (m-2-2);
\path[font=\normalfont] (m-1-1.base east) edge (m-1-3.base west)
                        (m-1-1.east) edge (m-1-3.west);
\end{tikzpicture}
\end{center}
The bundle gerbe $\delta Q = \pi_1^*Q \otimes \pi_2^*Q^* \to Y^{[2]}$ associated to a line bundle $Q \to Y$ will be called the \emph{trivial bundle gerbe}. Let $L \to Y^{[2]}$ be a bundle gerbe. A choice of a line bundle $Q \to Y$ together with an isomorphism of bundle gerbes $L \to \delta Q$ will be called a \emph{trivialization} of $L$.
\end{definition}

Let $L_i \to Y_i^{[2]}$ for $i \in \{1,2\}$ be two bundle gerbes and let $\pi_i \colon Y_1^{[2]} \times_M Y_2^{[2]} \to Y_i^{[2]}$ denote the projection. The exterior tensor product $L_1 \boxtimes L_2 = \pi_1^*L_1 \otimes \pi_2^*L_2$ is again a bundle gerbe. Each  bundle gerbe $L$ has a class $dd(L) \in H^2(M, \Z/2\Z)$ canonically associated to it as explained for the case of $S^1$-bundle gerbes in \cite{paper:BundleGerbes}. $dd(L)$ is called the \emph{Dixmier-Douady class} of $L$ and we summarize its properties in the following theorem proven in \cite{paper:BundleGerbes}:
\begin{theorem}
Let $L \to Y^{[2]}$ and $L_i \to Y_i^{[2]}$ for $i \in \{1,2\}$ be bundle gerbes. The Dixmier-Douady class has the following properties:
\begin{enumerate}[a)]
\item \label{it:ddLzero} $dd(L) = 0$ if and only if $L$ is isomorphic over $Y^{[2]}$ to a trivial bundle gerbe. 
\item $dd(L_1 \boxtimes L_2) = dd(L_1) + dd(L_2)$. \hfill $\square$ 
\end{enumerate}
\end{theorem}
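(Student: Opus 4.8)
The plan is to reduce both statements to a Čech-cohomological computation built directly from the construction of $dd(L)$, following Murray \cite{paper:BundleGerbes}. First I would recall that construction: choose a good open cover $\{U_i\}$ of $M$ such that the surjective submersion $Y \to M$ admits local sections $s_i \colon U_i \to Y$. The pairs $(s_i, s_j) \colon U_i \cap U_j \to Y^{[2]}$ pull $L$ back to line bundles $L_{ij}$, which — after refining the cover — we trivialize by nowhere-vanishing sections $\sigma_{ij}$. Restricting the multiplication $\mu$ along $(s_i, s_j, s_k) \colon U_i \cap U_j \cap U_k \to Y^{[3]}$ gives $\sigma_{ij} \otimes \sigma_{jk} = g_{ijk}\,\sigma_{ik}$ for a locally constant $g_{ijk} \colon U_i \cap U_j \cap U_k \to \{\pm 1\} \cong \Z/2\Z$; since the line bundles are real, the values necessarily lie in $\Z/2\Z$. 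The commuting tetrahedron over $Y^{[4]}$ in the definition is precisely the statement that $(g_{ijk})$ is a Čech $2$-cocycle, and one checks that changing the $\sigma_{ij}$, the sections $s_i$, or the cover alters it only by a coboundary; the resulting class is $dd(L) \in H^2(M, \Z/2\Z)$.

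For part b) I would pick the data for both gerbes at once: a single cover $\{U_i\}$ together with sections $s_i^1 \colon U_i \to Y_1$ and $s_i^2 \colon U_i \to Y_2$, so that $(s_i^1, s_i^2)$ are local sections of $Y_1 \times_M Y_2 \to M$. Because the multiplication on $L_1 \boxtimes L_2 = \pi_1^* L_1 \otimes \pi_2^* L_2$ is the tensor product $\mu_1 \otimes \mu_2$ of the two multiplications, the choice $\sigma_{ij} = \sigma_{ij}^1 \otimes \sigma_{ij}^2$ produces $g_{ijk} = g_{ijk}^1\, g_{ijk}^2$, i.e.\ $g^1_{ijk} + g^2_{ijk}$ in the additive notation for $\Z/2\Z$-cohomology, whence $dd(L_1 \boxtimes L_2) = dd(L_1) + dd(L_2)$. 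This part is pure bookkeeping.

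For part a), the implication that triviality of $L$ forces $dd(L) = 0$ is easy: for $L = \delta Q$ one takes the $\sigma_{ij}$ to be induced by a local trivialization of $Q$ along the $s_i$, and then the canonical contraction defining $\mu$ forces every $g_{ijk}$ to equal $1$. For the converse I would pull the cover back to $Y$, setting $V_i = \pi^{-1}(U_i)$ for $\pi \colon Y \to M$; along $y \mapsto (y, s_i(\pi(y)))$ the bundle $L$ restricts to a line bundle $\Lambda_i \to V_i$, and over $V_i \cap V_j$ the multiplication $\mu$ paired with $\sigma_{ij}$ yields an isomorphism $\phi_{ij} \colon \Lambda_i \to \Lambda_j$ satisfying $\phi_{jk} \circ \phi_{ij} = g_{ijk}\,\phi_{ik}$. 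If $dd(L) = 0$, write $g_{ijk} = h_{ij}\,h_{jk}\,h_{ik}$ with $h_{ij} \colon U_i \cap U_j \to \Z/2\Z$; replacing $\phi_{ij}$ by $h_{ij}\phi_{ij}$ turns $(\Lambda_i, \phi_{ij})$ into a genuine descent datum, hence a line bundle $Q \to Y$, and a diagram chase with $\mu$ then identifies $L$ with $\delta Q = \pi_1^* Q \otimes \pi_2^* Q^*$ over $Y^{[2]}$.

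I expect the main obstacle to be this last step of (a): upgrading the Čech-level trivialization to an actual line bundle $Q$ living on all of $Y$, and verifying that the induced isomorphism $L \cong \delta Q$ really intertwines the two multiplications. This is the point at which the submersion hypothesis is genuinely used, and where one must track the associativity encoded by $\mu$ carefully so that the identifications over $Y^{[2]}$ come out independent of the chosen index $i$.
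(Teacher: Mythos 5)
Your proposal is correct and is essentially the argument the paper itself relies on: the paper gives no proof of its own but defers to Murray \cite{paper:BundleGerbes}, whose \v{C}ech-theoretic construction via local sections $s_i$, trivializations $\sigma_{ij}$ and the resulting cocycle $g_{ijk}$ is exactly what you reproduce, adapted from $S^1$- to $\Z/2\Z$-bundle gerbes. The one point to tighten is that for real line bundles $g_{ijk}$ is a priori only $\R^\times$-valued, so it does not ``necessarily'' lie in $\{\pm 1\}$: you should normalize the $\sigma_{ij}$ to unit length with respect to chosen fibre metrics (or invoke that the sheaf of positive functions is soft, so $H^2$ with $\R^\times$-coefficients equals $H^2(M,\Z/2\Z)$) before asserting a locally constant $\pm 1$-valued cocycle; with that adjustment the rest of your argument, including the descent step producing $Q \to Y$ in part a), goes through.
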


\begin{definition}\label{def:bg_conn}
Let $L$ be a bundle gerbe. A covariant derivative $\nabla^L \colon \Omega^0(L) \to \Omega^1(L)$ on $L$ is called a \emph{bundle gerbe connection} if the multiplication isomorphism
\(
	\mu \colon \pi_{12}^* L \otimes \pi_{23}^* L \to \pi_{13}^* L
\)
pulls it back to the canonical connection on the tensor product, i.e.
\begin{equation} \label{eqn:covder}
	\mu^* \pi_{13}^* \nabla^L = \pi_{12}^* \nabla^L \otimes 1 + 1 \otimes \pi_{23}^* \nabla^L\ .
\end{equation}
\end{definition}

\begin{remark}\label{rem:everything_flat}
The proof for the existence of such connections given in \cite{paper:BundleGerbes} works with the obvious changes for $\Z/2\Z$-bundle gerbes as well. Since the structure group is discrete in our case, every $\Z/2\Z$-bundle gerbe connection is automatically flat.
\end{remark}

The main example of bundle gerbes will arise from the following construction.
\begin{definition} \label{def:lifting_bg}
Let $\Gamma$ be a Lie group (possibly discrete) and let $q \colon \widehat{\Gamma} \to \Gamma$ be a central $\Z/2\Z$-extension of $\Gamma$. Let $P \to M$ be a principal $\Gamma$-bundle over a manifold $M$. The line bundle $L_P$ associated to the principal $\Z/2\Z$-bundle $\widehat{L}_P \to P^{[2]}$ with
\(
	\widehat{L}_P = \{ (p_1,p_2,\widehat{g}) \in P^{[2]} \times \widehat{\Gamma}\ |\ p_1\, q(\widehat{g}) = p_2 \}
\)
is a bundle gerbe, which will be called the \emph{lifting bundle gerbe} associated to the extension. 
\end{definition}
For details about this construction see \cite[section 6.1]{paper:IntroToBGs}. Since the principal $\Z/2\Z$-bundle associated to any trivialization $Q$ of $L_P$ is a lift of $P$ to a principal $\wh{\Gamma}$-bundle, we have

\begin{lemma}[\cite{paper:BundleGerbes}] \label{lem:lifting} 
The class $dd(L_P)$ represents the obstruction of lifting $P \to M$ to a principal $\widehat{\Gamma}$-bundle. The isomorphism classes of trivialization of $L_P$ are in $1:1$-correspondence with the possible lifts of $P$ to a principal $\wh{\Gamma}$-bundle. 
\end{lemma}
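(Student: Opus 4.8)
The plan is to establish the bijection asserted in the second sentence directly, and then read off the first sentence from part~\ref{it:ddLzero} of the theorem above: since $dd(L_P)$ vanishes exactly when $L_P$ is isomorphic to a trivial bundle gerbe, i.e.\ exactly when a trivialization of $L_P$ exists, the bijection forces $dd(L_P) = 0$ to hold if and only if $P$ admits a lift to a principal $\wh{\Gamma}$-bundle, which is the precise sense in which $dd(L_P)$ is the obstruction to lifting. Throughout I take a \emph{lift} of $P$ to mean a principal $\wh{\Gamma}$-bundle $\wh{P} \to M$ together with a smooth $M$-map $f \colon \wh{P} \to P$ intertwining the right actions along $q$, two lifts being equivalent if there is a $\wh{\Gamma}$-equivariant isomorphism over $M$ commuting with the maps to $P$.

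From a lift to a trivialization: note that $f \colon \wh{P} \to P$ is itself a principal $\Z/2\Z$-bundle, because $q$ has kernel $\Z/2\Z$; let $Q \to P$ be its associated real line bundle. I would produce an isomorphism of bundle gerbes $L_P \to \delta Q$ on the level of the underlying principal $\Z/2\Z$-bundles. The fibre of the $\Z/2\Z$-bundle underlying $\delta Q$ over $(p_1,p_2) \in P^{[2]}$ is the contracted product $f^{-1}(p_1) \times_{\Z/2\Z} f^{-1}(p_2)$; I send the class of a pair $(\wh{p}_1, \wh{p}_2)$ with $f(\wh{p}_i) = p_i$ to $(p_1, p_2, \wh{g}) \in \wh{L}_P$, where $\wh{g} \in \wh{\Gamma}$ is the unique element with $\wh{p}_1 \wh{g} = \wh{p}_2$. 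Equivariance of $f$ along $q$ gives $p_1\, q(\wh{g}) = p_2$, so the target does lie in $\wh{L}_P$, and centrality of $\Z/2\Z$ in $\wh{\Gamma}$ shows the assignment is independent of the chosen representative, hence descends to the contracted product; it is fibrewise bijective by construction. Compatibility of this map with the two bundle-gerbe multiplications over $P^{[3]}$ is then a short diagram chase using associativity of the $\wh{\Gamma}$-action on $\wh{P}$.

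From a trivialization to a lift: given $(Q, \phi \colon L_P \xrightarrow{\sim} \delta Q)$, let $\wh{Q} \to P$ be the principal $\Z/2\Z$-bundle underlying $Q$; I would equip $\wh{Q}$ with a right $\wh{\Gamma}$-action over $M$ as follows. For $\wh{q}$ in the fibre of $\wh{Q}$ over $p \in P$ and for $\wh{h} \in \wh{\Gamma}$, set $p' := p\, q(\wh{h})$, so $(p, p', \wh{h}) \in \wh{L}_P$; applying $\phi$ yields a point of the $\Z/2\Z$-bundle underlying $\delta Q$ over $(p,p')$, equivalently a canonical isomorphism of $\Z/2\Z$-torsors $\wh{Q}_p \to \wh{Q}_{p'}$, and one defines $\wh{q} \cdot \wh{h}$ to be the image of $\wh{q}$ under it. Associativity of this action is precisely the compatibility of $\phi$ with the multiplication $\mu$ of $L_P$; freeness and fibrewise transitivity of $\wh{Q}$ over $M$ hold because each fibre $P_m$ is a $\Gamma$-torsor and $\wh{Q} \to P$ is a $\Z/2\Z$-bundle lying over it through the extension $\wh{\Gamma}$. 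Thus $\wh{Q} \to M$ with this action and the projection $\wh{Q} \to P$ is a lift of $P$, and a routine verification shows the two constructions induce mutually inverse bijections between isomorphism classes of trivializations of $L_P$ and equivalence classes of lifts of $P$. (Alternatively one may pass to a good open cover, lift a $\Gamma$-valued \v{C}ech $1$-cocycle $(g_{\alpha\beta})$ for $P$ to $\wh{\Gamma}$, and observe that the $\Z/2\Z$-valued $2$-cochain $\wh{g}_{\alpha\beta}\wh{g}_{\beta\gamma}\wh{g}_{\gamma\alpha}$ is a cocycle representing $dd(L_P)$ whose cohomological vanishing lets one correct the lift to a cocycle, exactly as in \cite{paper:BundleGerbes}; the description above avoids choosing a cover.)

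The step I expect to be the main obstacle is checking that the isomorphism $L_P \to \delta Q$ built from a lift respects the bundle-gerbe multiplications, and dually that the $\wh{\Gamma}$-action built from a trivialization is associative: both come down to matching the multiplication $\mu$ of $L_P$ with the tautological multiplication on $\delta Q$, which is exactly where the associativity diagram in the definition of a bundle gerbe is used. Smoothness and local triviality of everything in sight are automatic, since each map in the construction is assembled from smooth bundle maps.
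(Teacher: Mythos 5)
Your proposal is correct and takes essentially the same route the paper itself relies on: the paper justifies the lemma (with a citation to Murray) by the single observation that the principal $\Z/2\Z$-bundle underlying a trivialization of $L_P$ is exactly a lift of $P$ to a principal $\wh{\Gamma}$-bundle, which is precisely the correspondence you spell out in both directions, combined with the quoted property that $dd$ vanishes if and only if the bundle gerbe is trivializable.
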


\begin{definition} \label{def:TwistedHilbertA}
Let $M$ be a smooth manifold, $A$ a $C^*$-algebra and let $Y \to M$ be a surjective submersion. Let $L \to Y^{[2]}$ be a bundle gerbe. A (right) Hilbert $A$-module bundle $E \to Y$ together with an $A$-linear action 
\(
\gamma \colon L \otimes \pi_2^*E \to \pi_1^*E
\)
is called a \emph{twisted Hilbert $A$-module} bundle for $L$ if
the following associativity diagram commutes:
\begin{center}
\begin{tikzpicture}
\matrix (m) [matrix of math nodes, row sep=1cm, column sep=0.3cm, 
             text height=1.5ex, text depth = 0.25ex]
{ (\pi_{12}^* L \otimes \pi_{23}^* L) \otimes \pi_{3}^* E & & \pi_{12}^* L \otimes (\pi_{23}^* L \otimes \pi_{3}^* E) \\
\pi_{13}^* L \otimes \pi_{3}^* E & \pi_{1}^* E & \pi_{12}^* L \otimes \pi_{2}^* E \\};
\path[->,font=\scriptsize]
(m-1-1) edge node[left] {$\mu \otimes \text{id}$} (m-2-1)
(m-1-3) edge node[auto] {$\text{id} \otimes \gamma$} (m-2-3)
(m-2-1) edge node[below] {$\gamma$} (m-2-2)
(m-2-3) edge node[auto] {$\gamma$} (m-2-2);
\path[font=\normalfont] (m-1-1.base east) edge (m-1-3.base west)
                        (m-1-1.east) edge (m-1-3.west);
\end{tikzpicture}
\end{center}
$E$ will be called \emph{finitely generated} and \emph{projective}, if its fibers are finitely generated and projective as Hilbert $A$-modules. $\gamma$ will be called the \emph{twisting} of $E$.

Let $E$, $E'$ be two twisted Hilbert $A$-module bundles for the same bundle gerbe $L$ and denote the twistings by $\gamma$ and $\gamma'$. A right $A$-linear map $f \colon E \to E'$ will be called a \emph{morphism of twisted Hilbert $A$-module bundles} or (\emph{twisted morphism} for short) if the following diagram commutes: 
\begin{center}
\begin{tikzpicture}
\matrix (m) [matrix of math nodes, row sep=1.3cm, column sep=1.3cm, 
             text height=1.5ex, text depth = 0.25ex]
{ L \otimes \pi_2^* E & \pi_1^* E \\
  L \otimes \pi_2^* E' & \pi_1^* E' \\
};
\path[->,font=\scriptsize]
(m-1-1) edge node[auto] {$\gamma$} (m-1-2)
(m-2-1) edge node[auto] {$\gamma'$} (m-2-2)
(m-1-1) edge node[auto] {$\text{id}_L \otimes \pi_2^*f$} (m-2-1)
(m-1-2) edge node[auto] {$\pi_1^*f$} (m-2-2);
\end{tikzpicture}
\end{center}
\end{definition}
For $A = \C$ a twisted Hilbert $A$-module bundle is the same as a bundle gerbe module \cite{paper:KTheoryBGM}. Let $L_i \to Y_i^{[2]}$ for $i \in \{1,2\}$ be two bundle gerbes with $dd(L_1) = dd(L_2)$. A trivialization of $L_1^* \boxtimes L_2$ should be seen as a generalized morphism between $L_1$ and $L_2$ \cite{paper:Waldorf}. In particular, it induces a push-forward from twisted bundles for $L_1$ to twisted bundles for $L_2$.

\begin{lemma}\label{lem:descentlemma}
Let $E \to Y$ be a locally trivial bundle of right Hilbert $A$-modules with fiber $V$ over the domain space of a surjective submersion $\pi \colon Y \to M$. Assume that there is a bundle isomorphism
\[
	\phi \colon \pi_2^* E \overset{\cong}{\longrightarrow} \pi_1^*E \ ,
\]
where $\pi_i \colon Y^{[2]} \to Y$ denote the canonical projections. If the corresponding associativity diagram over $Y^{[3]}$ commutes, then there is a Hilbert $A$-module bundle $\widetilde{E} \to M$ with fiber $V$ and an isomorphism $E \to \pi^*\widetilde{E}$. $\widetilde{E}$ is unique up to isomorphism.
\end{lemma}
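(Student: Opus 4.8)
The plan is to run the classical descent/gluing argument for vector bundles, carried out locally over $M$ and then patched together, keeping track of the Hilbert $A$-module structure throughout. First I would choose a good open cover $\{U_\alpha\}$ of $M$ together with local sections $s_\alpha \colon U_\alpha \to Y$ of the surjective submersion $\pi$; these exist because submersions admit local sections. Using $s_\alpha$ I pull back $E$ to a Hilbert $A$-module bundle $E_\alpha := s_\alpha^* E \to U_\alpha$ with fiber $V$. Over $U_\alpha \cap U_\beta$ the pair $(s_\alpha, s_\beta)$ defines a map into $Y^{[2]}$, and pulling back $\phi$ along it yields an isomorphism $\phi_{\alpha\beta} \colon E_\beta|_{U_\alpha \cap U_\beta} \to E_\alpha|_{U_\alpha \cap U_\beta}$ of Hilbert $A$-module bundles. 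The associativity diagram over $Y^{[3]}$, evaluated on the triple $(s_\alpha, s_\beta, s_\gamma)$, gives exactly the cocycle identity $\phi_{\alpha\beta} \circ \phi_{\beta\gamma} = \phi_{\alpha\gamma}$ over $U_\alpha \cap U_\beta \cap U_\gamma$ (and $\phi_{\alpha\alpha} = \id$ from the diagonal). Hence the $E_\alpha$ glue to a well-defined Hilbert $A$-module bundle $\widetilde E \to M$ with fiber $V$.

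Next I would construct the isomorphism $E \cong \pi^* \widetilde E$. Over $\pi^{-1}(U_\alpha)$ one has a canonical identification: a point $y \in \pi^{-1}(U_\alpha)$ together with $s_\alpha(\pi(y))$ gives a point of $Y^{[2]}$, and $\phi$ evaluated there yields an isomorphism $E_y \to (E_\alpha)_{\pi(y)} = (\widetilde E)_{\pi(y)} = (\pi^*\widetilde E)_y$. Call this $\Psi_\alpha$. On overlaps $\pi^{-1}(U_\alpha \cap U_\beta)$ the two maps $\Psi_\alpha$ and $\Psi_\beta$ differ precisely by the transition function $\phi_{\alpha\beta}$ that was used to build $\widetilde E$ — this is again a consequence of the associativity diagram over $Y^{[3]}$ applied to $(y, s_\alpha(\pi(y)), s_\beta(\pi(y)))$ — so the $\Psi_\alpha$ assemble into a global bundle isomorphism $\Psi \colon E \to \pi^*\widetilde E$. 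Each $\Psi_\alpha$ is right $A$-linear and unitary on fibers because $\phi$ is an isomorphism of right Hilbert $A$-module bundles, so $\Psi$ is an isomorphism of Hilbert $A$-module bundles.

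Finally, uniqueness: if $\widetilde E'$ is another such bundle with an isomorphism $E \cong \pi^*\widetilde E'$, then composing gives an isomorphism $\pi^*\widetilde E \cong \pi^*\widetilde E'$ over $Y$ that is compatible with the descent data $\phi$ (it intertwines $\pi_1^*(\cdot)$ and $\pi_2^*(\cdot)$ via $\phi$ on both sides). Restricting this isomorphism along the local sections $s_\alpha$ produces isomorphisms $\widetilde E|_{U_\alpha} \cong \widetilde E'|_{U_\alpha}$ which agree on overlaps — again by the compatibility with $\phi$ over $Y^{[2]}$ — hence glue to a global isomorphism $\widetilde E \cong \widetilde E'$.

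The step I expect to require the most care is verifying that the locally defined gluing data $\phi_{\alpha\beta}$ and the comparison maps $\Psi_\alpha$ satisfy their cocycle/compatibility relations \emph{exactly on the nose} rather than up to higher coherence: this is where the commutativity of the associativity diagram over $Y^{[3]}$ is used, and one must check that the three evaluations of that diagram (on triples of sections, on $(y,s_\alpha,s_\beta)$, and on $(y, s_\alpha(\pi y), s_\beta(\pi y))$ with the two competing descended bundles) really do reduce to the identities needed. The Hilbert $A$-module bookkeeping is routine once one notes that all the maps in sight are pullbacks of the single fixed $A$-linear isomorphism $\phi$, hence automatically right $A$-linear and fiberwise unitary.
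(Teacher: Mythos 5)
Your argument is correct and follows essentially the same route as the paper's (very brief) proof: choose local sections $\sigma_i\colon U_i\to Y$, pull back $E$ to bundles $E_i$, use $\phi$ evaluated on pairs of sections to get transition maps $\phi_{ij}$ whose cocycle identity comes from the associativity diagram over $Y^{[3]}$, and glue. Your additional verifications — the explicit isomorphism $E\cong\pi^*\widetilde E$ via $\Psi_\alpha$ and the uniqueness argument — are details the paper leaves implicit, and they check out.
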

\begin{proof}
Cover $M$ by open sets $U_i$ such that there exist sections $\sigma_i \colon U_i \to Y$. Let $E_i = \sigma_i^* E$ and denote the maps induced by $\phi$ on the double intersections $U_{ij} = U_i \cap U_j$ by $\phi_{ij} \colon E_j \to E_i$. Now we can define
\[
\widetilde{E} = \coprod_{i \in I} E_i \ /\ \sim
\]
where the equivalence relation is induced by the maps $\phi_{ij}$.
\end{proof}

\begin{lemma} \label{lem:transfer}
Let $L_i \to Y_i^{[2]}$ for $i \in \{1,2\}$ be two bundle gerbes with $dd(L_1) = dd(L_2)$. Let $Q \to Y_1 \times_M Y_2$ be a trivialization of $L_1^* \boxtimes L_2 \to (Y_1 \times_M Y_2)^{[2]}$. If $E \to Y_1$ is a twisted bundle for $L_1$ and $\rho \colon Y_1 \times_M Y_2 \to Y_1$ denotes the canonical projection, then $\rho^*E \otimes Q$ descends to a twisted bundle $Q(E) \to Y_2$  for $L_2$. In particular if $L_2 = \trivial{\C}$ over $M^{[2]} = M$, i.e.\ $Q$ is a trivialization of $L_1^*$, we obtain an (untwisted) Hilbert $A$-module bundle $Q(E) \to M$.
\end{lemma}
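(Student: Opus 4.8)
The plan is to work over the fibre product $Z := Y_1 \times_M Y_2$, with its two projections $\rho \colon Z \to Y_1$ and $\sigma \colon Z \to Y_2$, both surjective submersions, and to obtain $Q(E)$ in two moves: first assemble a twisted bundle on $Z$ for the pullback of $L_2$ along $\sigma$, and then descend it along $\sigma$ to $Y_2$. Write $\rho^{[2]} = \rho \times_M \rho \colon Z^{[2]} \to Y_1^{[2]}$ and $\sigma^{[2]} = \sigma \times_M \sigma \colon Z^{[2]} \to Y_2^{[2]}$. Under the canonical identification $Z^{[2]} = Y_1^{[2]} \times_M Y_2^{[2]}$ the bundle gerbe $L_1^{*} \boxtimes L_2$ is $(\rho^{[2]})^{*}L_1^{*} \otimes (\sigma^{[2]})^{*}L_2$, so the given trivialization $\tau \colon \delta Q \overset{\cong}{\to} L_1^{*} \boxtimes L_2$ — whose very existence is what the hypothesis $dd(L_1) = dd(L_2)$ guarantees — may be rewritten, after tensoring with $(\rho^{[2]})^{*}L_1$ and using the evaluation isomorphism $L_1 \otimes L_1^{*} \cong \trivial{\R}$, as a bundle gerbe isomorphism
\[
  \bar\tau \colon (\rho^{[2]})^{*}L_1 \otimes \delta Q \overset{\cong}{\longrightarrow} (\sigma^{[2]})^{*}L_2 \ .
\]

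Next I would note that $\rho^{*}E \to Z$ is canonically a twisted bundle for $(\rho^{[2]})^{*}L_1$, the twisting being $(\rho^{[2]})^{*}\gamma$. More generally, for any real line bundle $R \to Z$ and any twisted bundle $F'$ for a bundle gerbe $\mathcal{L}$, the bundle $F' \otimes R$ is twisted for $\mathcal{L} \otimes \delta R$: the new twisting is built from the old one together with the evaluation pairing $\pi_2^{*}R^{*} \otimes \pi_2^{*}R \to \trivial{\R}$, and its associativity diagram follows from that of the old twisting. Taking $R = Q$ and transporting along $\bar\tau$ shows that
\[
  F := \rho^{*}E \otimes Q
\]
is a twisted bundle over $Z$ for the pullback gerbe $(\sigma^{[2]})^{*}L_2$, finitely generated and projective as soon as $E$ is, and locally trivial with the same fibre $V$ as $E$. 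Checking the associativity diagram of Definition \ref{def:TwistedHilbertA} for the twisting of $F$ is the first place where care is needed: it comes down to combining the associativity of $\gamma$ with the fact that $\tau$ is an isomorphism of bundle gerbes, hence intertwines the multiplications $\mu$.

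To descend $F$ to $Y_2$ I would restrict its twisting to the submanifold $Z \times_{Y_2} Z \subset Z^{[2]}$. Over that locus $(\sigma^{[2]})^{*}L_2$ is the pullback of $L_2$ restricted to the diagonal of $Y_2^{[2]}$, and the bundle gerbe multiplication $\mu_{L_2}$ makes $L_2$ along this diagonal a bundle of one-dimensional unital real algebras, hence endows it with a canonical trivialization. Composing the twisting with it produces an isomorphism $\phi \colon \pi_2^{*}F \to \pi_1^{*}F$ over $Z \times_{Y_2} Z$, and its associativity (cocycle) condition over $Z \times_{Y_2} Z \times_{Y_2} Z$ follows from the associativity of the twisting of $F$ together with the unitality of $\mu_{L_2}$ on the diagonal. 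Applying Lemma \ref{lem:descentlemma}, with $Y_2$, $Z$, $\sigma$ in the roles of $M$, $Y$, $\pi$, now yields a Hilbert $A$-module bundle $Q(E) \to Y_2$ with fibre $V$, unique up to isomorphism, together with an isomorphism $F \cong \sigma^{*}Q(E)$.

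It remains to descend the twisting itself. Over $Z^{[2]}$ the twisting of $F$ is a morphism between the $\sigma^{[2]}$-pullbacks of $L_2 \otimes \pi_2^{*}Q(E)$ and of $\pi_1^{*}Q(E)$; it descends along the surjective submersion $\sigma^{[2]}$ to a morphism $\gamma_{Q(E)} \colon L_2 \otimes \pi_2^{*}Q(E) \to \pi_1^{*}Q(E)$ over $Y_2^{[2]}$ precisely because it is compatible with the canonical descent datum $\phi$ in both of its variables — again a consequence of the associativity of the twisting of $F$ over $Z^{[3]}$ — and the associativity of $\gamma_{Q(E)}$ follows from that of the twisting of $F$ because pullback along the surjective submersion $\sigma$ is faithful. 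Thus $Q(E) \to Y_2$ is a twisted bundle for $L_2$. For the special case, if $Y_2 = M$ and $L_2 = \trivial{\C}$ over $M^{[2]} = M$, then $Z = Y_1$, $\rho = \id$, $\sigma$ is the submersion $Y_1 \to M$, $Q$ is a trivialization of $L_1^{*}$, the pullback gerbe $(\sigma^{[2]})^{*}L_2$ equals $\trivial{\C}$, and the twisting of $F = E \otimes Q$ degenerates to a plain descent datum, so $Q(E) \to M$ is an ordinary Hilbert $A$-module bundle. I expect the main obstacle to be purely the coherence bookkeeping running through these steps: one must simultaneously keep track of the canonical trivializations of $L_1$ and $L_2$ along their diagonals, of the compatibility of $\tau$ with the multiplications, and of the associativity of $\gamma$, and verify that all the commuting squares required by Definition \ref{def:TwistedHilbertA} and Lemma \ref{lem:descentlemma} genuinely commute.
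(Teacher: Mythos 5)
Your proposal is correct and follows essentially the same route as the paper's proof: both use the identification $\delta Q \cong L_1^* \boxtimes L_2$, the canonical (multiplication-compatible) trivialization of the gerbes along the diagonal to extract a descent datum over $Y_1^{[2]} \times_M Y_2 = Z \times_{Y_2} Z$ and an $L_2$-twisting from the other partial diagonal, the associativity of the $L_1$-action together with the gerbe-isomorphism property of the trivialization for all compatibilities, and then Lemma~\ref{lem:descentlemma}. Your two-step packaging (first viewing $\rho^*E \otimes Q$ as twisted by $(\sigma^{[2]})^*L_2$ over $Z$, then descending bundle and twisting along $\sigma$) is only a mild reorganization of the paper's fiberwise argument.
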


\begin{proof}
In view of the last lemma we have to prove that $\pi_2^*(\rho^*E \otimes Q) \cong \pi_1^*(\rho^*E \otimes Q)$, where $\pi_i \colon Y_1^{[2]} \times_M Y_2 \to Y_1 \times_M Y_2$ is the projection to the $i$th $Y_1$-factor. Moreover, we need to check that associativity holds and that the result carries an $L_2$-twisting. Since $Q$ trivializes $L_1^* \boxtimes L_2$ we have the following isomorphism over $(y^i_1, y^i_2) \in Y_i^{[2]}$:
\[
(L_1^*)_{(y^1_1, y^1_2)} \otimes (L_2)_{(y^2_1, y^2_2)} \to Q_{(y^1_1, y^2_1)} \otimes (Q^*)_{(y^1_2, y^2_2)}\ .
\]
The fiber $(L_i)_{(y,y)}$ can be canonically identified with $\C$ in a way which is compatible with the product operation. In particular, we get $(L_1^*)_{(y^1_1, y^1_2)} \otimes Q_{(y^1_2, y^2)} \to Q_{(y^1_1, y^2)}$ by setting $y^2_1 = y^2_2 = y^2 \in Y_2$. From this, we obtain
\[
E_{y^1_2} \otimes Q_{(y^1_2, y^2)} \to (L_1)_{(y^1_1, y^1_2)} \otimes E_{y^1_2} \otimes (L_1^*)_{(y^1_1, y^1_2)} \otimes Q_{(y^1_2, y^2)} \to E_{y^1_1} \otimes Q_{(y^1_1, y^2)}\ .
\]
The first map is the canonical identification $L_1 \otimes L_1^* \to \trivial{\C}$, the second is induced by the action of $L_1$ on $E$ and the above map. Likewise, the other diagonal embedding $Y_1 \times_M Y_2^{[2]} \to (Y_1 \times_M Y_2)^{[2]}$ yields an isomorphism
\(
(L_2)_{(y^2_1, y^2_2)} \otimes Q_{(y^1, y^2_2)} \to Q_{(y^1, y^2_1)} 
\)
for $(y^2_1, y^2_2) \in Y_2^{[2]}$ and $y^1 \in Y_1$. After tensoring with $E$ this becomes the twisting map
\(
(L_2)_{(y^2_1, y^2_2)} \otimes E_{y^1} \otimes Q_{(y^1, y^2_2)} \to E_{y^1} \otimes Q_{(y^1, y^2_1)}\ .
\)

By the associativity of $L_1^* \boxtimes L_2$ and the fact that the above two maps are derived from a bundle gerbe isomorphism, this twisting commutes with the action that is used to define the descended bundle $Q(E)$. Moreover, this implies the associativity of the twisting as well as the associativity of the descend isomorphism. 
\end{proof}

\begin{definition}\label{def:twisted_K}
Let $L \to Y^{[2]}$ be a bundle gerbe over a surjective submersion $Y \to M$ and let $A$ be a unital $C^*$-algebra. Denote by $K^0_{L,A}(M)$ the Grothendieck group of isomorphism classes of finitely generated projective twisted Hilbert $A$-module bundles for $L$.
\end{definition}

Given a principal $PO(n)$-bundle $P \to M$ there is a bundle of matrix algebras $\mathcal{K} \to M$ with fiber $M_n(\C)$ associated to it via the inclusion $PO(n) \to PU(n)$ and the adjoint action of $PU(n)$. Let $L_P \to P^{[2]}$ be the corresponding lifting bundle gerbe for the extension $O(n) \to PO(n)$. There is a \emph{canonical twisted vector bundle} $S \to P$ for $L_P$ associated to this construction as follows: Let $S = P \times \C^n$ with the twisting given by
\[
	\gamma \colon L_P \otimes \pi_2^*S \to \pi_1^*S \quad ; \quad \gamma([\wh{g}, \lambda] \otimes v) = \lambda\,\wh{g}\,v
\]
where $\widehat{g} \in O(n)$ is a lift of the group element $g \in PO(n)$ determined by the underlying points $(p_1,p_2) \in P^{[2]}$, $\lambda \in \R$ and $(\widehat{g}, -\lambda) \sim (-\widehat{g}, \lambda)$. Given a twisted Hilbert $A$-module bundle $E \to Y$ for some bundle gerbe $L$, note that $S^* \boxtimes E := \pi_P^*S^* \otimes \pi_Y^*E$ over $P \times_M Y$ is a twisted Hilbert $A$-module bundle for $L_P^* \boxtimes L$.

\begin{theorem}\label{thm:Twisted_K}
Let $L$ be a bundle gerbe, let $A$ be a unital $C^*$-algebra. Let $P$, $L_P$ and $\mathcal{K}$ be as in the above paragraph. Assume that $dd(L) = dd(L_P)$ and let $Q$ be a trivialization of $L_P^* \boxtimes L$, then 
\[
	\kappa_Q \colon K^0_{L,A}(M) \to K_0(C(M,\mathcal{K}) \otimes A) \quad ; \quad [E] \mapsto [C(M,Q(S^* \boxtimes E))]
\]
is a well-defined isomorphism.
\end{theorem}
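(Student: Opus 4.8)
The plan is to construct an explicit inverse $\lambda_Q$ to $\kappa_Q$ and check the two are mutually inverse group homomorphisms. Since $M$ is compact, the Serre--Swan correspondence in the Hilbert module setting (standard) identifies finitely generated projective $C(M,\mcal{K})\otimes A$-modules with locally trivial finitely generated projective Hilbert $(\mcal{K}\otimes A)$-module bundles over $M$, so I would phrase everything in terms of isomorphism classes of bundles. The one geometric input is this: since $S\to P$ is the canonical twisted vector bundle for $L_P$, its endomorphism bundle $\Endo{S}=S\otimes S^*$ is twisted for the canonically trivial bundle gerbe $L_P\otimes L_P^*$, hence descends along $P\to M$ by Lemma~\ref{lem:descentlemma}; comparing with the associated-bundle description of $\mcal{K}$ (built from $PO(n)\hookrightarrow PU(n)$ acting by conjugation) identifies this descent canonically with $\mcal{K}$, i.e.\ $\pi_P^*\mcal{K}\cong\Endo{S}$ over $P$, compatibly with the $L_P$-action on $S$. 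Thus $\pi_P^*S$ and $\pi_P^*S^*$ are the Morita bimodule bundles relating $\trivial{\C}$ and $\pi_P^*\mcal{K}$.

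For well-definedness of $\kappa_Q$: given a finitely generated projective twisted Hilbert $A$-module bundle $E\to Y$ for $L$, the bundle $S^*\boxtimes E$ over $P\times_M Y$ is twisted for $L_P^*\boxtimes L$ (as noted in the text), so by Lemma~\ref{lem:transfer} with $L_2=\trivial{\C}$ over $M$ the bundle $Q(S^*\boxtimes E)$ is an honest Hilbert $A$-module bundle over $M$. The fibrewise left action of $\Endo{\pi_P^*S}=\pi_P^*\Endo{S}$ on $\pi_P^*S^*$ commutes with the twisting and with the right $A$-action, so it survives the descent and makes $Q(S^*\boxtimes E)$ a bundle of Hilbert $(\mcal{K}\otimes A)$-modules with fibre $(\C^n)^*\otimes V$ --- finitely generated projective over $M_n(\C)\otimes A$ whenever $V$ is so over $A$, by Morita equivalence of $A$ and $M_n(\C)\otimes A$. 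Passing to continuous sections over compact $M$ yields a finitely generated projective $C(M,\mcal{K})\otimes A$-module, and since $\boxtimes$, pullback, $\otimes\, Q$ and descent all preserve $\oplus$ and isomorphisms, $\kappa_Q$ is a well-defined homomorphism.

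For the inverse: given a finitely generated projective Hilbert $(\mcal{K}\otimes A)$-module bundle $\mcal{E}\to M$, pull it back to $P$ --- where by the identification above it becomes a module bundle over $\pi_P^*\mcal{K}\otimes A\cong\Endo{S}\otimes A$ --- and set $F:=S\otimes_{\Endo{S}}\pi_P^*\mcal{E}\to P$, a Hilbert $A$-module bundle inheriting an $L_P$-twisting from $S$ (for $\mcal{E}$ trivial one recovers $S\otimes\trivial{A}$), with finitely generated projective fibres. Applying Lemma~\ref{lem:transfer} again, now with $L_1=L_P$, $L_2=L$ and the same data $Q$, produces a twisted Hilbert $A$-module bundle $\lambda_Q(\mcal{E}):=Q(F)\to Y$ for $L$; additivity and isomorphism-invariance make $\lambda_Q$ a homomorphism $K_0(C(M,\mcal{K})\otimes A)\to K^0_{L,A}(M)$. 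That $\kappa_Q\circ\lambda_Q\cong\id$ and $\lambda_Q\circ\kappa_Q\cong\id$ should then follow by telescoping the natural isomorphisms $S^*\otimes_\C S\cong\Endo{S}$ (finite rank), $\Endo{S}\otimes_{\Endo{S}}(-)\cong(-)$, and $Q\otimes Q^*\cong\trivial{\C}$ matched up through the defining isomorphism $\delta Q\cong L_P^*\boxtimes L$, using that descent is compatible with all these tensor operations.

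The hard part will be bookkeeping rather than any single idea: carrying the two twistings and the left $\mcal{K}$- and right $A$-module structures coherently through the chain pullback $\to$ $\Endo{S}$-balanced tensor product $\to$ $Q$-transfer $\to$ descent over the iterated fibre products $P\times_M Y$ (and their doubles), and verifying that the ``canonical'' identifications entering the two composites are genuinely natural --- in particular that the transfers built from $Q$ and from $Q^*$ compose to the identity, and that the Morita cancellation is compatible with both twistings and with the descent along $P\to M$. The analytic points --- finite generation and projectivity of the various fibre modules and the Serre--Swan identification for Hilbert module bundles over compact $M$ --- are routine and I would simply cite them.
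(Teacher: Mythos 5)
Your proposal is correct and is essentially the paper's own argument: well-definedness is obtained exactly as you describe, by descending $S^*\boxtimes E$ (whose transition functions act through $O(n)\subset U(n)$ on the $(\C^n)^*$-factor and $A$-linearly on the other factor, giving $C(M,Q(S^*\boxtimes E))$ its right Hilbert $C(M,\mathcal{K}\otimes A)$-module structure), and the inverse is implemented by the Morita bimodule $S$. The only difference is presentational: where you form the balanced tensor product $S\otimes_{\Endo{S}}\pi_P^*\mcal{E}$, the paper realizes the same construction concretely by lifting a projection-valued section $\sigma$ of $\mathcal{K}\otimes M_k(A)$ to an equivariant map $\widehat{\sigma}\colon P\to M_n(\C)\otimes M_k(A)$ and taking $E_\sigma=\{(p,v)\in P\times(\C^n\otimes A^k)\ |\ \widehat{\sigma}(p)v=v\}$, then transferring from $L_P$-twisted to $L$-twisted bundles via the trivialization, exactly as in your last step.
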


\begin{proof}\label{pf:Twisted_K}
Let $V$ be the typical fiber of $E$. It is a finitely generated projective right Hilbert $A$-module, therefore $(\C^n)^* \otimes V$ is a finitely generated projective right Hilbert $M_n(\C) \otimes A$-module in a canonical way. Thus, $S^* \boxtimes E$ is a bundle of finitely generated projective right Hilbert $M_n(\C) \otimes A$-modules over $P \times_M Y$. When descending this bundle as sketched in Lemma \ref{lem:descentlemma}, we first obtain bundles $(\C^n)^* \otimes E_i \to U_i$ over an open cover $U_i \subset M$. The transitions over double intersections act via the canonical action of $O(n) \subset U(n)$ on the first tensor factor and in an $A$-linear way on the second tensor factor. Therefore, $C(M, Q(S^* \boxtimes E))$ carries an action of $C(M,\mathcal{K} \otimes A)$ and the fiberwise $M_n(\C) \otimes A$-valued inner product on $S^* \boxtimes E$ turns $C(M, Q(S^* \boxtimes E))$ into a right Hilbert $C(M,\mathcal{K} \otimes A)$-module. Since $V$ was finitely generated and projective, the same holds true for $C(M, Q(S^* \boxtimes E))$.

Let $\sigma$ be a projection-valued section of the bundle $\mathcal{K} \otimes M_k(A) \to M$. Let $\pi \colon P \to M$ be the bundle projection, then $\sigma \circ \pi$ yields a section of $\pi^*\mathcal{K} \otimes M_k(A)$. This bundle has a canonical trivialization, such that $\sigma \circ \pi$ yields a continuous map $\widehat{\sigma} \colon P \to M_n(\C) \otimes M_k(A)$ with the property $\widehat{\sigma}(pg) = \widehat{g}^{-1}\,\sigma(p)\,\widehat{g}$. Let $E_{\sigma} = \{ (p,v) \in P \times (\C^n \otimes A^{k})\ |\ \widehat{\sigma}(p)\,v = v\}$. This is a finitely generated, projective twisted Hilbert $A$-module bundle for $L_P$. The map $[\sigma] \mapsto [E_{\sigma}]$ is a homomorphism $K_0(C(M,\mathcal{K}) \otimes A) \to K^0_{L_P,A}(M)$. The composition with the isomorphism $K^0_{L_P,A}(M) \to K^0_{L,A}(M)$ induced by $Q^*$ yields the inverse of the above construction.
\end{proof}

\begin{lemma}\label{lem:comparison}
Consider principal $PO(n_i)$-bundles $P_i$ for $i \in \{1,2\}$ and let $\mathcal{K}_i$ be the associated bundles of complex matrix algebras. We have $dd(L_{P_1}) = dd(L_{P_2})$ if and only if $\mathcal{K}_1 \otimes \mathcal{K}_2 \cong \Endo{\mathcal{V}} \otimes \C$ for a real vector bundle $\mathcal{V} \to M$. The isomorphism classes of trivializations of $L_{P_1}^* \boxtimes L_{P_2}$ are in $1:1$-correspondence with the isomorphism classes of possible $\mathcal{V}$'s.
\end{lemma}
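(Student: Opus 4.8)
The plan is to deduce the statement from Lemma~\ref{lem:lifting} by recognising $L_{P_1}^*\boxtimes L_{P_2}$ as a lifting bundle gerbe. Since the Dixmier--Douady class is $\Z/2\Z$-valued, a real $\Z/2\Z$-bundle gerbe is canonically isomorphic to its dual, so $L_{P_1}^*\boxtimes L_{P_2}\cong L_{P_1}\boxtimes L_{P_2}$ and $dd(L_{P_1}^*\boxtimes L_{P_2}) = dd(L_{P_1}) + dd(L_{P_2})$; this class vanishes precisely when $dd(L_{P_1}) = dd(L_{P_2})$. By property~(\ref{it:ddLzero}) the latter holds if and only if $L_{P_1}^*\boxtimes L_{P_2}$ admits a trivialisation. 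So it suffices to produce, on the one hand, from any trivialisation a real vector bundle $\mathcal V$ with $\mathcal K_1\otimes\mathcal K_2\cong\Endo{\mathcal V}\otimes\C$, and, on the other hand, an identification of the isomorphism classes of trivialisations with those of such bundles $\mathcal V$.

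For the construction I would set $N = n_1n_2$ and $G = PO(n_1)\times PO(n_2)$, so that $P := P_1\times_M P_2\to M$ is a principal $G$-bundle. The Kronecker product $O(n_1)\times O(n_2)\to O(N)$, $(\wh g_1,\wh g_2)\mapsto \wh g_1\otimes \wh g_2$, annihilates $(-I,-I)$, hence descends to a homomorphism of the central $\Z/2\Z$-extension $\wh G := (O(n_1)\times O(n_2))/\{\pm(I,I)\}$ of $G$, and this $\wh G$ is exactly the pullback of $O(N)\to PO(N)$ along the induced map $\bar\kappa\colon G\to PO(N)$. Unwinding Definition~\ref{def:lifting_bg} and the definition of the exterior product one checks that $L_{P_1}^*\boxtimes L_{P_2}$ is canonically the lifting bundle gerbe of $\wh G\to G$ for the $G$-bundle $P$. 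By Lemma~\ref{lem:lifting} its trivialisations correspond up to isomorphism to lifts $\wh P$ of $P$ to a principal $\wh G$-bundle, and to such a lift we associate $\mathcal V := \wh P\times_{\wh G}\R^N$. Since the Kronecker homomorphism followed by $O(N)\to PO(N)$ factors through $G$, the adjoint bundle $\Endo{\mathcal V} = \wh P\times_{\wh G}M_N(\R)$ descends to $P\times_G M_N(\R)$ with $G$ acting by $(g_1,g_2)\cdot X = (\Ad{g_1}\otimes\Ad{g_2})X$; complexifying gives $\Endo{\mathcal V}\otimes\C\cong P\times_G M_N(\C)\cong\mathcal K_1\otimes\mathcal K_2$. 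This yields the forward direction of the ``if and only if'' and one half of the bijection.

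Concretely this can be run in Čech cochains: pick a good cover with transition cocycles $g^{(i)}_{\alpha\beta}$ for $P_i$, lifts $\wh g^{(i)}_{\alpha\beta}$ to $O(n_i)$, and associativity defects $\epsilon^{(i)}_{\alpha\beta\gamma} = \wh g^{(i)}_{\alpha\beta}\wh g^{(i)}_{\beta\gamma}(\wh g^{(i)}_{\alpha\gamma})^{-1}\in\{\pm1\}$ representing $dd(L_{P_i})$; then $dd(L_{P_1}) = dd(L_{P_2})$ means $\epsilon^{(1)}_{\alpha\beta\gamma}\epsilon^{(2)}_{\alpha\beta\gamma} = h_{\alpha\beta}h_{\beta\gamma}h_{\alpha\gamma}^{-1}$ for some $\{\pm1\}$-valued $(h_{\alpha\beta})$, the matrices $\wh g_{\alpha\beta} := h_{\alpha\beta}\,(\wh g^{(1)}_{\alpha\beta}\otimes\wh g^{(2)}_{\alpha\beta})\in O(N)$ then form a genuine cocycle (their total defect equals $(\epsilon^{(1)}_{\alpha\beta\gamma}\epsilon^{(2)}_{\alpha\beta\gamma})^2 = 1$) defining $\mathcal V$, and $\Ad{\wh g_{\alpha\beta}} = \Ad{\wh g^{(1)}_{\alpha\beta}}\otimes\Ad{\wh g^{(2)}_{\alpha\beta}}$ is the cocycle of $\mathcal K_1\otimes\mathcal K_2$, the scalars $h_{\alpha\beta}$ disappearing under $\Ad{\,\cdot\,}$. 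The step I expect to be the real obstacle is the converse: recovering the trivialisation (equivalently the coboundary $h$, equivalently the lift) from a bare isomorphism $\mathcal K_1\otimes\mathcal K_2\cong\Endo{\mathcal V}\otimes\C$. Such an isomorphism a priori respects only the $PU(N)$-structure, so naively one extracts only $\beta\bigl(dd(L_{P_1}) + dd(L_{P_2})\bigr) = 0$ in $H^3(M;\Z)$, which is weaker than the desired identity in $H^2(M;\Z/2\Z)$. The point is that both $\mathcal K_1\otimes\mathcal K_2$ and $\Endo{\mathcal V}\otimes\C$ carry a canonical fibrewise complex conjugation exhibiting them as complexifications of bundles of \emph{real} matrix algebras (for the left side this is $P\times_G M_N(\R)$, with structure $PO(N)$-bundle $P\times_{\bar\kappa}PO(N)$); showing that the isomorphism can be taken compatible with these real forms reduces $P\times_{\bar\kappa}PO(N)$ to $O(N)$, and via the pullback description of $\wh G$ this reduction is exactly a $\wh G$-lift of $P$, hence a trivialisation. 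Carrying out this comparison of real forms, together with the routine verification that non-isomorphic lifts give non-isomorphic $\mathcal V$'s and conversely, completes the proof.
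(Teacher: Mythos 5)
Your first half coincides with the paper's proof: the group you call $\wh{G}$ appears there as $O(n_1)\otimes O(n_2)$, equality of the Dixmier--Douady classes is used to lift $P_1\times_M P_2$ to an $O(n_1)\otimes O(n_2)$-bundle $P$, one sets $\mathcal{V}=P\times_\rho(\R^{n_1}\otimes\R^{n_2})$, and Lemma~\ref{lem:lifting} gives the correspondence between trivializations and lifts; your identification of $L_{P_1}^*\boxtimes L_{P_2}$ with the lifting bundle gerbe of $\wh{G}\to G$ and the \v{C}ech computation are more explicit renderings of exactly this. One caveat: the verification you call routine, that non-isomorphic lifts give non-isomorphic $\mathcal{V}$'s, is not routine if ``possible $\mathcal{V}$'s'' are read as bare vector bundles (over $T^2$ every lift of the trivial $PO(2)$-bundle yields $\mathcal{V}\cong\underline{\R}^2$, since $\ell\oplus\ell\cong\underline{\R}^2$ for every real line bundle $\ell$ there); the correspondence is really with $\mathcal{V}$ together with its identification, and the paper is equally brief here, asserting only that $\mathcal{V}$ depends only on the lift.

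The genuine gap is the converse, which you explicitly leave open, and the comparison with the paper is instructive: the paper's argument for that direction is exactly the step you flag, done in one line. It takes the orthogonal frame bundle $P$ of $\mathcal{V}$, asserts that $P/(\Z/2\Z)$ ``reduces to'' $P_1\times_M P_2$ --- which is precisely where an isomorphism respecting the canonical $PO(n_1n_2)$-structures (your real forms) is silently used --- and then pulls back the covering $P\to P/(\Z/2\Z)$ to obtain the $O(n_1)\otimes O(n_2)$-lift, just as in your pullback description of $\wh{G}$. So you have located the load-bearing point correctly, but your proposed repair --- showing that an arbitrary isomorphism of complex algebra bundles can be made compatible with the real forms --- cannot be carried out in general: two $PO(N)$-reductions of one $PU(N)$-bundle need not have the same $\Z/2\Z$-lifting obstruction. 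For instance, take $n_1=2$, $n_2=1$, $M=\R P^3$, $\gamma$ the tautological line bundle and $a\in H^1(M;\Z/2\Z)$ the generator. Reductions of an $SO(3)=PU(2)$-bundle to $PO(2)\cong S(O(2)\times O(1))$ correspond to splittings $W\cong W_2\oplus\det W_2$ of the associated oriented rank-three bundle, and one checks that the $O(2)$-lifting obstruction of such a reduction equals $w_2(W_2)$; the bundle $W=\gamma\oplus\gamma\oplus\underline{\R}$ splits both with $W_2=\gamma\oplus\gamma$ (obstruction $a^2\neq 0$) and with $W_2=\gamma\oplus\underline{\R}$ (obstruction $0$), and the second reduction lifts to an $O(2)$-bundle, i.e.\ comes from a real rank-two bundle $\mathcal{V}$ with $\Endo{\mathcal{V}}\otimes\C\cong\mathcal{K}_1$ as complex algebra bundles, although $dd(L_{P_1})=a^2\neq 0=dd(L_{P_2})$. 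Hence the ``only if'' direction is only available when the isomorphism is required to respect the real forms (equivalently, when $\mathcal{V}$ is taken together with the identification furnished by a lift), which is how the paper implicitly reads the hypothesis and the only form in which the lemma is applied later (Remark~\ref{rem:compatibility}); with the bare reading, your strategy and the paper's one-line argument break down at the same place.
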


\begin{proof} \label{pf:comparison}
Let $O(n_1) \otimes O(n_2)$ be the quotient of the product $O(n_1) \times O(n_2)$ by the diagonal $Z/2\Z$-action. If $dd(L_{P_1}) = dd(L_{P_2})$, then $P_1 \times_M P_2$ lifts to a principal $O(n_1) \otimes O(n_2)$-bundle $P$ and $\mathcal{V} = P \times_{\rho} (\R^{n_1} \otimes \R^{n_2})$ satisfies the condition, where $\rho$ is the standard representation. The isomorphism class of $\mathcal{V}$ only depends on the one of $P$ and the choices of possible $P$'s are in $1:1$-correspondence with the trivializations of $L_{P_1}^* \boxtimes L_{P_2}$. 

On the other hand, if $\mathcal{V}$ exists, let $P$ be its principal $O(n_1n_2)$-bundle. $P/(\Z/2\Z)$ reduces to the principal $PO(n_1) \times PO(n_2)$-bundle $P_1 \times_M P_2$ and the pullback of $P \to P/(\Z/2\Z)$ via $P_1 \times_M P_2 \to P/(\Z/2\Z)$ is a lift of $P_1 \times_M P_2$ to a principal $O(n_1) \otimes O(n_2)$-bundle, which shows that $dd(L_{P_1}) = dd(L_{P_2})$. 
\end{proof}

\begin{remark} \label{rem:compatibility}
Note that $C(M, \mathcal{V} \otimes \C) \otimes A$ provides a Morita equivalence bimodule between $C(M, \mathcal{K}_1) \otimes A$ and $C(M, \mathcal{K}_2) \otimes A$ and $\psi_{\mathcal{V}} \colon K_0(C(M, \mathcal{K}_1) \otimes A) \to K_0(C(M, \mathcal{K}_2) \otimes A)$ with $\psi_{\mathcal{V}}([X]) = [X \otimes_{C(M,\mathcal{K}_1) \otimes A} C(M, \mathcal{V} \otimes \C) \otimes A]$ is the corresponding isomorphism on $K$-theory. 

Let $L$ be a bundle gerbe. Let $\mathcal{K}_i$, $P_i$ for $i\in \{1,2\}$ be as in Lemma \ref{lem:comparison}. Suppose that $dd(L) = dd(L_{P_1}) = dd(L_{P_2})$. Trivializations $Q_i$ of $L^* \boxtimes L_{P_i}$ induce a trivialization of $L_{P_1}^* \boxtimes L_{P_2}$, since $Q_1^* \boxtimes Q_2$ over $P_1 \times_M P \times_M P \times_M P_2$ descends to a line bundle $Q_{12}$ over $P_1 \times_M P_2$ in a way compatible with the actions of $L_{P_i}$. The trivialization $Q_{12}$ corresponds to a real vector bundle $\mathcal{V}_{12}$ as described in Lem\-ma~\ref{lem:comparison}. Let $\kappa_{Q_i}([E]) = [C(M, Q_i(S_i^* \boxtimes E))]$ be the homomorphism from Theorem~\ref{thm:Twisted_K} associated to the data $Q_i$ and the canonical bundle $S_i$. A bundle gerbe $L$ provides a canonical trivialization of $L^* \boxtimes L$ (in particular, if $P$ is a principal $PO(n)$-bundle, then the principal $PO(n) \times PO(n)$-bundle $P^{[2]}$ has a canonical lift to a $O(n) \otimes O(n)$-bundle). Let $\kappa_{L_{P_i}}$ be the associated isomorphism from Theorem \ref{thm:Twisted_K}. Then the the following diagram of isomorphisms commutes:
\begin{center}
\begin{tikzpicture}
\matrix (m) [matrix of math nodes, row sep=0.7cm, column sep=1.3cm, 
             text height=1.5ex, text depth = 0.25ex]
{ & K_0(C(M, \mathcal{K}_1) \otimes A) & K^0_{L_{P_1}, A}(M) \\
K^0_{L,A}(M) \\
& K_0(C(M, \mathcal{K}_2) \otimes A) & K^0_{L_{P_2}, A}(M) \\
};
\path[->,font=\scriptsize]
(m-2-1) edge node[above] {$\kappa_{Q_1}$} (m-1-2)
(m-2-1) edge node[below] {$\kappa_{Q_2}$} (m-3-2)
(m-1-2) edge node[auto] {$\psi_{\mathcal{V}}$} (m-3-2)
(m-1-3) edge node[above] {$\kappa_{L_{P_1}}$} (m-1-2)
(m-3-3) edge node[below] {$\kappa_{L_{P_2}}$} (m-3-2)
(m-1-3) edge node[auto] {$Q_{12}$} (m-3-3);
\end{tikzpicture}
\end{center}
\end{remark}

\subsection{The twisted Mishchenko-Fomenko bundle}
The obstruction $\alpha(M)$ is built out of the Dirac operator on $M$ and a certain bundle of Hilbert $C^*(\pi_1(M))$-modules called the Mishchenko-Fomenko bundle. In the twisted case the algebra as well as the bundle itself have to be replaced by twisted versions. 

\begin{definition}
Let $1 \to \Z/2\Z \to \wh{\pi} \to \pi \to 1$ be a central $\Z/2\Z$-extension of a discrete group $\pi$. Let $C^*_r(\wh{\pi})$ be the reduced group $C^*$-algebra associated to $\wh{\pi}$, likewise let $C^*_{\text{max}}(\wh{\pi})$ be its maximal group $C^*$-algebra. Let $e \in \Z/2\Z$ be the non-trivial element and observe that $q = \tfrac{1}{2}(1 - e) \in C_{r/\text{max}}^*(\wh{\pi})$ is a central projection. Let $C^*_r(\wh{\pi} \to \pi) = q\,C^*_r(\wh{\pi}) = q\,C^*_r(\wh{\pi})\,q$ and define $\Cmax{\wh{\pi}}{\pi}$ analogously. It is straightforward to check that these are in fact $C^*$-algebras, which we will call the \emph{reduced}, respectively \emph{maximal twisted group $C^*$-algebra} \cite[Definition 8.1]{paper:StolzConcordance}.
\end{definition}

\begin{remark}
An extension like in the above definition can be classified by a group $2$-cocycle $c_{\wh{\pi}} \in H^2_{\text{gr}}(\pi,\Z/2\Z)$, which yields an alternative description of $C^*_r(\wh{\pi} \to \pi)$ using completions of the twisted group ring $\C[\pi, c_{\wh{\pi}}]$ \cite[Definition 2.1 for $A = \C$]{paper:BusbySmith}. The algebra $\Cmax{\wh{\pi}}{\pi}$ has the universal property that any projective representation $\rho \colon \pi \to U(H)$ for the cocycle $c_{\wh{\pi}}$ extends to a $*$-homomorphism $\Cmax{\wh{\pi}}{\pi} \to B(H)$.
\end{remark}

Let $M$ be a smooth oriented manifold with $\dim(M) \geq 3$, such that its universal cover $\wtM$ carries a spin structure. Denote by $P_{SO}$ the oriented frame bundle of $M$ and let $\pi = \pi_1(M)$. Let $P_{SO(\wtM)}$ be the frame bundle of the universal cover. Consider the exact sequence
\begin{equation} \label{eqn:ex_seq_wtM}
	\pi_2(\wtM) \to \pi_1(SO(n)) \to \pi_1(P_{SO(\wtM)}) \to 1
\end{equation}
The map $\pi_2(\wtM) \to \pi_1(SO(n))$ sends $f \colon S^2 \to \wtM$ to the transition map $\varphi_f \colon S^1 \to SO(n)$ obtained from the pullback $f^*P_{SO(\wtM)}$. If $\wtM$ carries a spin structure, then $\varphi_f$ factors through $S^1 \to \text{Spin}(n)$ and is therefore nullhomotopic. Comparing (\ref{eqn:ex_seq_wtM}) with the exact sequence
\[
	\pi_2(M) \to \pi_1(SO(n)) \to \pi_1(P_{SO}) \to \pi_1(M) \to 1
\]
for the bundle $P_{SO} \to M$, a diagram chase shows that $\pi_2(M) \to \pi_1(SO(n))$ is also trivial in this case and we obtain a central $\Z/2\Z$-extension \cite{paper:StolzConcordance}:
\begin{equation}\label{eqn:pihat}
	1 \to \Z / 2\Z \to \pi_1(P_{SO}) \to \pi \to 1\ .
\end{equation}

\begin{definition}\label{def:LMF}
The lifting bundle gerbe $L_{\wh{\pi}} \to \wtM^{[2]}$ associated to the above central ex\-ten\-sion~(\ref{eqn:pihat}) is called the \emph{Mishchenko-Fomenko bundle gerbe}.
\end{definition}

Note that a trivialization of $L_{\wh{\pi}} \to \wtM^{[2]}$ \emph{as a line bundle} is given by a split $\pi \to \pi_1(P_{SO}) = \wh{\pi}$. After applying this trivialization, the bundle gerbe multiplication over points $\tilde{m}_1, \tilde{m}_2, \tilde{m}_3$ with $\tilde{m}_2 = \tilde{m}_1g_1$, $\tilde{m}_3 = \tilde{m}_2g_2$ for some $g_1,g_2 \in \pi$ is transformed to $\mu((\tilde{m}_1,\tilde{m}_2, z_1), (\tilde{m}_2, \tilde{m}_3, z_2)) = (\tilde{m}_1, \tilde{m}_3, z_1\,z_2\,c_{\wh{\pi}}(g_1,g_2))$ for some cocycle $c_{\wh{\pi}} \colon \pi \times \pi \to \Z/2\Z$ representing the extension in the group $H^2(\pi,\Z/2\Z)$ and $z_i \in \C$.

\begin{definition}\label{def:MFbundle}
Let $\wh{\pi} = \pi_1(P_{SO})$ and let $1 \to \Z/2\Z \to \wh{\pi} \to \pi \to 1$ be the extension described above. The canonical twisted Hilbert $A$-module bundle for $A = C^*_{\text{max}}(\wh{\pi} \to \pi)$ given by $\Vmax = \wtM \times C^*_{\text{max}}(\wh{\pi} \to \pi)$ is called the \emph{(maximal) twisted Mishchenko-Fomenko bundle}. Similarly, we define $\Vred = \wtM \times C^*_r(\wh{\pi} \to \pi)$.
\end{definition}

Let $E \to \wtM$ be a twisted Hilbert $A$-module bundle for $L_{\wh{\pi}}$. Trivializing the bundle gerbe $L_{\wh{\pi}}$ (and thereby fixing a cocycle $c_{\wh{\pi}}$) turns the twisting into a collection of bundle maps that are on the fibers given by $\gamma^g \colon E_{\tilde{m}} \to E_{\tilde{m}g^{-1}}$ with the property that $\gamma^g \circ \gamma^h = c_{\wh{\pi}}(g,h)\gamma^{gh}$, i.e.\ it behaves like a projective representation. 

Now suppose that $E$ is smooth and comes equipped with a twisted connection $\nabla$. Fix $\widetilde{m} \in \wtM$, let $m$ be its image in $M$ and let $c \colon I \to \wtM$ be a smooth curve in $\wtM$ starting at $\widetilde{m}$. Let $\para{c} \colon I \times E_{\widetilde{m}} \to E$ be the parallel transport with respect to $\nabla$. Let $\wh{g} \in \wh{\pi}$, let $g \in \pi$ be its image under the projection. Observe that $\para{c}$ is equivariant in the sense that $\para{cg^{-1}}(t, \wh{g} \cdot v) = \wh{g} \cdot \para{c}(t,v)$ for $t \in I$, $v \in E$. Let $\tau \colon S^1 \to M$ be a smooth loop in $M$ at $m$ representing an element $h \in \pi = \pi_1(M)$, let $\bar{\tau} \colon I \to \wtM$ be the lift of $\tau$ to $\widetilde{m}$. Note that the endpoint of $\bar{\tau}$ is $\widetilde{m}h^{-1}$. Define $\pEndo{\widetilde{m}}{\tau}(v) := \para{\bar{\tau}}(1,v)$ and let  $\hol{\tau}{\widetilde{m}} \colon E_{\widetilde{m}} \to E_{\widetilde{m}}$ be given by $\hol{\tau}{\widetilde{m}} = \gamma^{h^{-1}} \circ \pEndo{\widetilde{m}}{\tau}$. It satisfies
\begin{align*}
c_{\wh{\pi}}(g,h)^{-1}\,\hol{\bar{m}}{\sigma \ast \tau} &= c_{\wh{\pi}}(g,h)^{-1} \gamma^{(gh)^{-1}} \pEndo{\bar{m}}{\sigma \ast \tau}  = \gamma^{h^{-1}} \circ \gamma^{g^{-1}} \circ \pEndo{\bar{m}g^{-1}}{\tau} \circ \pEndo{\bar{m}}{\sigma} \\
& = \gamma^{h^{-1}} \circ \left(\gamma^{g^{-1}} \circ \pEndo{\bar{m}g^{-1}}{\tau} \circ \gamma^g\right) \circ \gamma^{g^{-1}} \circ \pEndo{\bar{m}}{\sigma} \\
& = \gamma^{h^{-1}} \circ \pEndo{\bar{m}}{\tau} \circ \gamma^{g^{-1}} \circ \pEndo{\bar{m}}{\sigma} = \hol{\bar{m}}{\tau} \circ \hol{\bar{m}}{\sigma}\ .
\end{align*}
which proves that it is a projective representation of $\pi$ on $E_{\widetilde{m}}$ with respect to $c_{\wh{\pi}}$.
\begin{definition} \label{def:projhol}
We will call $\hol{\cdot}{\widetilde{m}}$ the \emph{projective holonomy of $\nabla$  at $\widetilde{m}$}. 
\end{definition}

\section{Index Theory on Twisted Bundles}

\subsection{The Chern Character}
Twisted versions of the Chern character have been developed in \cite[Section 6]{paper:KTheoryBGM}, \cite[Section 3.2]{paper:GerbesAndIndexThms}. Crucial in both cases is the idea that the endomorphism bundle $\Endo{F} \to Y$ of a twisted vector bundle $F$ for a bundle gerbe $L$ descends to a vector bundle $\bgend{F}$ over $M$. Let $\nabla^F$ be a twisted connection on $F$. Since we deal with the case of real bundle gerbes, where curving forms do not matter, the curvature descends to a form $\Omega_F \in \Omega^2(M, \bgend{F})$.
\begin{definition}\label{def:ChernCharacter}
The \emph{twisted Chern character} $\ch(F)$ is the class 
\begin{equation} \label{eqn:ChernCh}
	\ch(F) = {\rm tr}\left(\exp\left(\frac{i\Omega_F}{2\pi}\right)\right) \in H^{\text{even}}(M, \R)\ .
\end{equation}
\end{definition}

It induces a group homomorphism $\ch \colon K^0_{L,\C}(M) \to H^{\text{even}}(M, \R)$. If $L_1, L_2$ are two bundle gerbes with $dd(L_1) = dd(L_2)$ and $Q$ is a trivialization of $L_1^* \boxtimes L_2$, then 
\begin{center}
\begin{tikzpicture}
\matrix (m) [matrix of math nodes, row sep=0.2cm, column sep=1.3cm, 
             text height=1.5ex, text depth = 0.25ex]
{ K^0_{L_1,\C}(M) \\
& H^{\text{even}}(M, \R) \\
K^0_{L_2,\C}(M) \\
};
\path[->,font=\scriptsize]
(m-1-1) edge node[above] {$\ch$} (m-2-2)
(m-3-1) edge node[below] {$\ch$} (m-2-2)
(m-1-1) edge node[auto] {$Q$} (m-3-1);
\end{tikzpicture}
\end{center}
commutes, since $Q$ is a flat line bundle. 

Choose a lifting bundle gerbe $L_P$ with $dd(L) = dd(L_P)$. The K\"unneth theorem and the canonical isomorphism $K^0_{L_P,\C}(M) \to K^0(C(M, \mathcal{K}))$ allows us to extend the above definition to a $K_0(A) \otimes \R$-valued Chern character via 
\[
	\ch \colon K^0_{L,A}(M) \to K^0_{L_P, \C}(M) \otimes K_0(A) \otimes \R \to H^{\text{even}}(M, K_0(A) \otimes \R)\ .
\]
The first map is obtained from Theorem \ref{thm:Twisted_K} using a trivialization $Q$ of $L^* \boxtimes L_P$. As we have seen in Remark \ref{rem:compatibility}, the result of $\ch$ does not depend on the choice of $P$ and $L_P$.

\begin{lemma} \label{lem:chern_mult}
Let $L_i \to Y_i^{[2]}$ for $i \in \{1,2\}$ be bundle gerbes. The following diagram commutes:
\begin{center}
\begin{tikzpicture}
\matrix (m) [matrix of math nodes, row sep=1.3cm, column sep=1.3cm, 
             text height=1.5ex]
{ K^0_{L_1,A}(M) \times K^0_{L_2,\C}(M) & K^0_{L_1 \boxtimes L_2,A}(M) \\
  H^{{\rm even}}(M, K_0(A) \otimes \R) \times H^{{\rm even}}(M,\R) & H^{{\rm even}}(M, K_0(A) \otimes \R) \\
};
\path[->,font=\scriptsize]
(m-1-1) edge node[auto] {$\boxtimes$} (m-1-2)
(m-2-1) edge node[auto] {$\wedge$} (m-2-2)
(m-1-1) edge node[auto] {$\ch \times \ch$} (m-2-1)
(m-1-2) edge node[auto] {$\ch$} (m-2-2);
\end{tikzpicture} 
\end{center}
\end{lemma}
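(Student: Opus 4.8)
The plan is to prove the case $A = \C$ by a twisted Chern--Weil computation and then deduce the general case formally. For $\C$-coefficients the claim is that $\ch(E_1 \boxtimes E_2) = \ch(E_1) \wedge \ch(E_2)$ for twisted vector bundles $E_i$ for $L_i$, where on the right the product is the cup product on $H^{\rm even}(M,\R)$; this is the twisted analogue of the fact that the ordinary Chern character is a ring homomorphism, and it carries all the geometric content.

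For this I would choose bundle gerbe connections on $L_1$ and $L_2$ --- automatically flat by Remark~\ref{rem:everything_flat} --- so that their exterior tensor product is a flat bundle gerbe connection on $L_1 \boxtimes L_2$, together with twisted connections $\nabla^{E_1}$, $\nabla^{E_2}$ compatible with the twistings (existence is the twisted version of the usual partition-of-unity argument). Let $\pi_i \colon Y_1 \times_M Y_2 \to Y_i$ be the projections and equip $E_1 \boxtimes E_2 = \pi_1^* E_1 \otimes \pi_2^* E_2$ with the tensor product connection $\nabla = \pi_1^* \nabla^{E_1} \otimes 1 + 1 \otimes \pi_2^* \nabla^{E_2}$. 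One checks that $\nabla$ is a twisted connection for $L_1 \boxtimes L_2$: the twisting of $E_1 \boxtimes E_2$ is the tensor product of the twistings $\gamma_i$, and each $\gamma_i$ being parallel makes the tensor twisting parallel. A tensor product connection has no mixed curvature term, and no curving enters because everything is flat, so $R^\nabla = \pi_1^* R^{\nabla^{E_1}} \otimes 1 + 1 \otimes \pi_2^* R^{\nabla^{E_2}}$. Under the canonical identification $\Endo{\pi_1^* E_1 \otimes \pi_2^* E_2} = \pi_1^* \Endo{E_1} \otimes \pi_2^* \Endo{E_2}$, which is compatible with the descent data because $L_i$ acts on $\Endo{E_i}$ by conjugation and hence independently of scalars, this descends to an isomorphism $\bgend{E_1 \boxtimes E_2} \cong \bgend{E_1} \otimes \bgend{E_2}$ of algebra bundles over $M$ under which $\Omega_{E_1 \boxtimes E_2} = \Omega_{E_1} \otimes 1 + 1 \otimes \Omega_{E_2}$. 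Since $\Omega_{E_1} \otimes 1$ and $1 \otimes \Omega_{E_2}$ commute in $\Omega^{\rm even}(M, \bgend{E_1} \otimes \bgend{E_2})$ --- the forms commute because they have even degree, the endomorphism parts because they act on different tensor factors --- one obtains $\exp(\tfrac{i}{2\pi}\Omega_{E_1 \boxtimes E_2}) = \exp(\tfrac{i}{2\pi}\Omega_{E_1}) \otimes \exp(\tfrac{i}{2\pi}\Omega_{E_2})$, and applying the tensor product trace ${\rm tr}_{\bgend{E_1} \otimes \bgend{E_2}} = {\rm tr}_{\bgend{E_1}} \otimes {\rm tr}_{\bgend{E_2}}$ yields $\ch(E_1 \boxtimes E_2) = \ch(E_1) \wedge \ch(E_2)$, since the cup product on $H^{\rm even}(M,\R)$ is induced by the wedge product of forms.

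For general $A$ the Chern character on $K^0_{L_1,A}(M)$ is, by definition, the composition through $K^0_{L_{P_1},\C}(M) \otimes K_0(A) \otimes \R$ built from Theorem~\ref{thm:Twisted_K} and the K\"unneth isomorphism. Choosing a lifting gerbe for $L_1 \boxtimes L_2$ compatibly with $L_{P_1}$ and $L_{P_2}$, the isomorphisms of Theorem~\ref{thm:Twisted_K} turn $\boxtimes$ into the external product of $K$-theory classes over $C(M)$, which is natural with respect to the K\"unneth maps; the general statement then follows from the $\C$-case applied to the lifting gerbes. The main obstacle I expect is the bookkeeping in the middle paragraph: checking carefully that the tensor product connection really is a twisted connection for $L_1 \boxtimes L_2$ and, above all, that the descent of $R^\nabla$ factors through the identification $\bgend{E_1 \boxtimes E_2} \cong \bgend{E_1} \otimes \bgend{E_2}$ compatibly with the fiberwise traces. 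Everything else is either the standard transgression argument for the independence of $\ch$ on the connection (already implicitly used in the discussion preceding the lemma) or the formal properties of the external product in $K$-theory.
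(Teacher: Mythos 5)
Your proof is correct and follows essentially the same route as the paper: the case $A=\C$ by the twisted Chern--Weil multiplicativity argument (which the paper merely states is ``analogous to the untwisted case'' and you spell out, including the descent $\bgend{E_1 \boxtimes E_2} \cong \bgend{E_1} \otimes \bgend{E_2}$), and the general case by reducing through Theorem~\ref{thm:Twisted_K} and the K\"unneth decomposition, which is exactly the commutative diagram the paper uses. No gaps; only note that the paper compares $L_1 \boxtimes L_2$ with $L_P \boxtimes L_2$ (choosing a lifting gerbe only for $L_1$ and keeping $L_2$ as is), a slightly lighter bookkeeping than choosing lifting gerbes for both factors, but this is an inessential variation.
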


\begin{proof} 
For $A = \C$ the proof is analogous to the untwisted case. For arbitrary~$A$, choose a principal $PO(n)$-bundle $P$ and a trivialization $Q$ of $L_1^* \boxtimes L_P$ as in Theorem~\ref{thm:Twisted_K}. Note that $Q$ and the canonical trivialization of $L_2^* \boxtimes L_2$ induce a trivialization of $(L_1 \boxtimes L_2)^* \boxtimes (L_P \boxtimes L_2)$. The statement is a consequence of the following commutative diagram
\begin{center}
\begin{tikzpicture}
\matrix (m) [matrix of math nodes, row sep=0.7cm, column sep=1cm, 
             text height=1.5ex]
{ K^0_{L_1,A}(M) \times K^0_{L_2,\C}(M) & K^0_{L_P,\C}(M) \otimes K_0(A) \otimes \R \times K^0_{L_2,\C}(M) \\
  K^0_{L_1 \boxtimes L_2,A}(M) & K^0_{L_P \boxtimes L_2,\C}(M) \otimes K_0(A) \otimes \R \\
};
\path[->,font=\scriptsize]
(m-1-1) edge (m-1-2)
(m-2-1) edge (m-2-2)
(m-1-1) edge (m-2-1)
(m-1-2) edge (m-2-2);
\end{tikzpicture} 
\end{center}
in which the vertical maps are given by tensor products and the horizontal ones are the decomposition maps from the K\"unneth theorem.
\end{proof}

\subsubsection{Chern character and traces} 
\label{ssub:chern_character_and_traces}
If $A$ comes equipped with a trace $\tau$, which we will assume for the rest of this section, there is a more direct approach to the Chern character, which was explored in the untwisted case in \cite[Section 4]{paper:SchickKKConnections}. If $V$ is a finitely generated projective Hilbert $A$-module, then we have $\Endo{V} = \mathcal{K}(V,V) \cong V \otimes_A \mathcal{K}(V,A)$, where $\mathcal{K}(V,W)$ denotes the compact adjointable $A$-linear operators. Setting $\tau_V(v \otimes T) = \tau(T(v))$ on elementary tensors, extends the trace to $\Endo{V}$. 

Let $L$ be a bundle gerbe and $E$ be a finitely generated projective twisted Hilbert $A$-module bundle for $L$. As was already noted above, $\Endo{E}$ descends to a bundle of $C^*$-algebras $\bgend{E}$ over $M$. Extending $\tau$ to the fibers yields a linear map $\Omega^n(M, \bgend{E}) \to \Omega^n(M)$ on $\bgend{E}$-valued forms, which we will again denote by~$\tau$.

\begin{definition}\label{def:tauChern}
Let $M$ be a compact smooth manifold, $L$ be a bundle gerbe and $E$ be a twisted Hilbert $A$-module bundle for $L$ equipped with a twisted connection $\nabla^E$ that has curvature $\Omega_E \in \Omega^2(M, \bgend{E})$. As in  \cite[Lemma~4.2]{paper:SchickKKConnections} it follows that 
\[
	\tau\left( \exp\left( \frac{i\Omega_E}{2\pi}\right)\right) \in \Omega^{\text{even}}(M)
\]
is a closed form, which is independent of the choice of twisted connection. Its cohomology class $\ch_{\tau}(E)$ is called the \emph{$\tau$-Chern character} of $E$.
\end{definition}

To explain the connection between $\ch$ from Definition \ref{def:ChernCharacter} and $\ch_{\tau}$ we need the following concept.
\begin{definition}\label{def:dimension}
Let $V$ be a finitely generated projective right Hilbert $A$-module. Its \emph{dimension} is defined to be $\dim_{\tau}(V) = \tau_V(\id_V)$, where $\tau_V$. If $V \cong t A^n$ for some projection $t \in M_n(A) = M_n(\C) \otimes A$, then $\dim_{\tau}(V) = (\text{tr} \otimes \tau)(t)$. $\dim_{\tau}$ yields a well-defined map $K_0(A) \to \R$.	
\end{definition}

\begin{theorem}\label{thm:chTau_vs_ch}
Let $L$ be a bundle gerbe and let $E$ be a twisted Hilbert $A$-module bundle for $L$. Then we have
\begin{equation} \label{eqn:dimtau}
	\dim_{\tau}(\ch(E)) = \ch_{\tau}(E) \ \in \Heven{M}{\R}\ .
\end{equation}
\end{theorem}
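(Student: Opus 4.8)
The plan is to reduce the asserted identity, by additivity of both sides and the isomorphism of Theorem~\ref{thm:Twisted_K}, to a Chern--Weil computation for the model twisted bundles $E_\sigma$ constructed in the proof of Theorem~\ref{thm:Twisted_K}, and then to recognize the two resulting de~Rham forms as the two sides of the familiar \emph{untwisted} comparison of Chern characters. First I would reduce to the case $L = L_P$ of a lifting bundle gerbe. If $Q$ trivializes $L^* \boxtimes L_P$ and $E$ is a twisted Hilbert $A$-module bundle for $L$, then the push-forward $Q(E)$ of Lemma~\ref{lem:transfer} is a twisted bundle for $L_P$ with $\bgend{Q(E)} \cong \bgend{E}$ canonically: tensoring by the line bundle $Q$ does not affect $\Endo{-}$, and both bundles descend compatibly. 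Since $Q$ is flat (Remark~\ref{rem:everything_flat}), a twisted connection on $E$ tensored with the flat connection on $Q$ induces one on $Q(E)$ with the same descended curvature $\Omega_E\in\Omega^2(M,\bgend{E})$ and the same fibrewise trace, so $\ch_{\tau}(Q(E)) = \ch_{\tau}(E)$; that $\ch$ is likewise unchanged is already recorded in the excerpt (the triangle after Lemma~\ref{lem:chern_mult} together with the choice-independence of Remark~\ref{rem:compatibility}). Hence we may assume $L = L_P$.

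For $L = L_P$, Theorem~\ref{thm:Twisted_K} provides an isomorphism $\kappa_{L_P}\colon K^0_{L_P,A}(M)\to K_0(C(M,\mcal{K})\otimes A)$ with inverse $[\sigma]\mapsto[E_\sigma]$, where $\sigma$ ranges over projection-valued sections of $\mcal{K}\otimes M_k(A)\to M$; since every element of $K_0(C(M,\mcal{K})\otimes A)$ is a difference of two such classes and both $\ch$ and $\ch_{\tau}$ are additive, it suffices to prove $\ch_{\tau}(E_\sigma) = \dim_{\tau}(\ch(E_\sigma))$ for each $\sigma$. Equip $E_\sigma$ with the twisted connection obtained by compressing the trivial flat connection on $P\times(\C^n\otimes A^k)$ by $\wh{\sigma}$. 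A direct computation shows that its curvature descends to a standard Chern--Weil form $\Omega_{E_\sigma} = \wh{\sigma}\,(d\wh{\sigma})\wedge(d\wh{\sigma})\,\wh{\sigma}\in\Omega^2(M,\bgend{E_\sigma})$, that $\bgend{E_\sigma}$ is the corner $\wh{\sigma}\,(\mcal{K}\otimes M_k(A))\,\wh{\sigma}$, and that the fibrewise trace of Definition~\ref{def:dimension} on it is the restriction of the tensor product $\mathrm{tr}\otimes\tau$ of the matrix trace with $\tau$. Thus $\ch_{\tau}(E_\sigma)$ is represented by $(\mathrm{tr}\otimes\tau)\bigl(\exp\bigl(\tfrac{i}{2\pi}\,\wh{\sigma}\,(d\wh{\sigma})\wedge(d\wh{\sigma})\,\wh{\sigma}\bigr)\bigr)$, the ordinary Chern--Weil representative of the Chern character of the projection $\sigma$ in $C(M,\mcal{K})\otimes A$, contracted against $\mathrm{tr}\otimes\tau$.

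On the other side, $\ch(E_\sigma)$ is by definition obtained from $\kappa_{L_P}([E_\sigma]) = [\sigma]$ by applying the K\"unneth theorem, then the $\C$-valued twisted Chern character of Definition~\ref{def:ChernCharacter} on the $K^0_{L_P,\C}(M)\cong K^0(C(M,\mcal{K}))$-factor, and then $\dim_{\tau}$ on the $K_0(A)$-factor; here the $\C$-valued twisted Chern character of a projective $C(M,\mcal{K})$-module is itself the classical Chern--Weil form of the associated projection, and $\dim_{\tau}$ contracts the $A$-coefficient precisely as $\tau$ does in the fibrewise trace above --- on the $K_0(C(M,\mcal{K}))\otimes K_0(A)$-summand this is the product formula $\ch(tA^N\boxtimes F) = [tA^N]\cdot\ch(F)$ of Lemma~\ref{lem:chern_mult} together with the product rule for the curvature of $tA^N\boxtimes F$, applied summand by summand. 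Reassembling, $\dim_{\tau}(\ch(E_\sigma))$ is represented by the same form, and the theorem follows. The step I expect to be the main obstacle is exactly this last matching: that the ``abstract'' passage through the K\"unneth splitting and the $\C$-valued twisted Chern character produces, as a cohomology class, the ``concrete'' $(\mathrm{tr}\otimes\tau)$-Chern--Weil form. This is the untwisted identity $\dim_{\tau}\circ\ch = \ch_{\tau}$ for finitely generated projective Hilbert $C(M,\mcal{K})\otimes A$-modules; it may be derived from the untwisted theory (cf.\ the techniques of \cite{paper:SchickKKConnections}) after a Morita reduction that removes the $\mcal{K}$-factor via $\mcal{K}\otimes\mcal{K}\cong\Endo{\mcal{V}}\otimes\C$ (Lemma~\ref{lem:comparison}), or by running the local argument directly. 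In particular it subsumes the subsidiary point that the $K_1\otimes K_1$-summand of the K\"unneth formula is annihilated by both invariants --- by construction for $\ch$, and by the same untwisted comparison for $\ch_{\tau}$.
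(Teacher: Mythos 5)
Your reduction to $L=L_P$ and your Chern--Weil computation of $\ch_{\tau}(E_\sigma)$ via the Grassmann connection are fine, and they reproduce one half of what is needed (the paper gets the same half more formally from the multiplicativity $\ch_\tau(F\boxtimes\trivial{V})=\ch(F)\,\dim_\tau(V)$). But the heart of the theorem is not addressed. By definition, $\ch$ with $A$-coefficients is obtained by pushing $[\sigma]\in K_0(C(M,\mcal{K})\otimes A)$ through the rational K\"unneth decomposition and \emph{discarding} the summand $K^1_{L_P,\C}(M)\otimes K_1(A)\otimes\R$, so to identify $\dim_\tau(\ch(E_\sigma))$ with your $({\rm tr}\otimes\tau)$-Chern--Weil form you must show that classes lying in that second summand have vanishing $\tau$-Chern character. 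You acknowledge this "subsidiary point" but claim it is subsumed in "the untwisted identity $\dim_\tau\circ\ch=\ch_\tau$ for finitely generated projective Hilbert $C(M,\mcal{K})\otimes A$-modules." That is circular: via Theorem~\ref{thm:Twisted_K} this identity \emph{is} the theorem for $L=L_P$, and it is not untwisted --- $C(M,\mcal{K})$ is a continuous-trace algebra with (in general) nontrivial Dixmier--Douady class, so the result of \cite{paper:SchickKKConnections} does not apply to it directly. Your one concrete suggestion for the reduction also fails: Lemma~\ref{lem:comparison} gives $\mcal{K}\otimes\mcal{K}\cong\Endo{\mcal{V}}\otimes\C$, i.e.\ it Morita-trivializes the \emph{doubled} twist $C(M,\mcal{K}\otimes\mcal{K})$, not $C(M,\mcal{K})$ itself; tensoring over $C(M)$ with $C(M,\mcal{K})$ is not a Morita equivalence and does not remove the $\mcal{K}$-factor, so there is no such reduction to the genuinely untwisted case.

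The paper closes exactly this gap with a suspension/Bott-periodicity argument, which is the step your proposal is missing. One first proves multiplicativity of $\ch_\tau$, namely $\ch_{\tau}(E\boxtimes F)=\ch_{\tau}(E)\cup\ch(F)$, from the curvature formula for tensor product connections. Rationally, every class in the second K\"unneth summand arises as $b([F\,\wh{\boxtimes}\,W])$ with $F$ representing a class in $K^0_{\pi^*L_P,\C}(\R\times M)$ and $W\in K_0(C_0(\R,A))$, where $b$ is Bott periodicity; multiplicativity gives $\ch_{\tau}(b(X))\cup e=\ch_{\tau}(X)$ for a generator $e\in H^2_c(\R^2,\R)$, and then
\[
\ch_{\tau}\bigl(b(F\,\wh{\boxtimes}\,W)\bigr)\cup e=\ch(F)\cup\ch_{\tau}(W)=0
\]
because $\ch_{\tau}(W)\in H^{\rm even}_c(\R,\R)=0$. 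Without this (or some substitute argument that you would have to supply for the algebra $C(M,\mcal{K})\otimes A$ itself), your proof establishes only the compatibility on the $K^0_{L_P,\C}(M)\otimes K_0(A)$ summand and leaves the asserted equality unproven.
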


\begin{proof}\label{pf:chTau_vs_ch}
First observe that $\ch_{\tau}$ is still multiplicative, in the sense that for a twisted Hilbert $A$-module bundle $E$ and a twisted vector bundle $F$ we have $\ch_{\tau}(E \boxtimes F) = \ch_{\tau}(E) \cup \ch(F)$. In fact, this property rests only on the fact that the canonical tensor product connection $\nabla^{E \boxtimes F}$ has curvature $\Omega_{E \boxtimes F} = \Omega_E \otimes \id + \id \otimes \Omega_F \in \Omega^2(M, \bgend{E} \otimes \bgend{F})$ together with the calculation
\[
	\tau\left(\exp\left( \frac{i\,\Omega_{E \boxtimes F}}{2\pi} \right)\right) = \tau\left(\exp\left( \frac{i\,\Omega_{E}}{2\pi}\right)\right) \wedge \tau\left(\exp\left( \frac{i\,\Omega_{F}}{2\pi} \right)\right)
\]
which is proven as in the untwisted case. 

Choose $P$ and a trivialization $Q$ as in Theorem \ref{thm:Twisted_K} and let $F$ be a twisted vector bundle for $L_P$. Let $V$ be a finitely generated projective Hilbert $A$-module. Observe that for the trivial bundle $\trivial{V}$ we have $\ch_{\tau}(\trivial{V}) = \dim_{\tau}(V)$. Thus,
\[
	\ch_{\tau}(F \otimes V) = \ch_{\tau}(F \boxtimes \trivial{V}) = \ch(F) \cup \ch_{\tau}(\trivial{V}) = \dim_{\tau}(V)\,\ch(F)\ .
\]
Likewise we have $\ch(F \otimes V) = \ch(F) \otimes [V] \in \Heven{M}{K_0(A) \otimes \R}$. Therefore the following diagram commutes
\begin{center}
\begin{tikzpicture}
\matrix (m) [matrix of math nodes, row sep=0.7cm, column sep=1cm, 
             text height=1.5ex]
{ K^0_{L_P,\C}(M) \otimes K_0(A) & K_0(C(M, \mathcal{K} \otimes A)) & K^0_{L,A}(M) \\
  & \Heven{M}{\R} \\
};
\path[->,font=\scriptsize]
(m-1-1) edge (m-1-2)
(m-1-2) edge node[auto] {$\rho_Q^{-1}$} (m-1-3)
(m-1-1) edge node[below] {$\ch \otimes \dim_{\tau}$} (m-2-2)
(m-1-3) edge node[below] {$\ch_{\tau}$} (m-2-2);
\end{tikzpicture} 
\end{center}
where the first horizontal map is given by the canonical identification $K^0_{L_P,\C}(M) \cong K_0(C(M,\mathcal{K}))$ and the tensor product of modules. The statement follows if $\ch_{\tau}$ vanishes on the second summand in the K\"unneth decomposition. We can identify $K^1_{L_P,\C}(M)$ with $K^0_{\pi^*L_P,\C}(\R \times M)$. Likewise we can represent classes in $K_1(A)$ as compactly supported virtual Hilbert $A$-module bundles over $\R$. If $F = (F_+, F_-, \varphi)$ represents an element in $K^0_{\pi^*L_P,\C}(\R \times M)$ and $W = (W_+,W_-, \psi)$ an element in $K_0(C_0(\R,A))$, then the graded tensor product $F \,\wh{\boxtimes}\,W$ yields an element in $K^0_{\pi^*L_P,A}(\R^2 \times M)$ and Bott periodicity maps it to $b([F \,\wh{\boxtimes}\, W]) \in K^0_{L_P,A}(M)$. It is a consequence of the multiplicativity that $\ch_{\tau}(X) = \ch_{\tau}(b(X)) \cup e$, where $e \in H^2_c(\R^2, \R) \cong \R$ denotes a generator and $X$ is a triple representing a class in $K^0_{\pi^*L_P,A}(\R^2 \times M)$. Thus,
\[
	\ch_{\tau}(b(F \boxtimes W)) \cup e = \ch_{\tau}(F \boxtimes W) = \ch(F) \cup \ch_{\tau}(W) = 0
\]
since $\ch_{\tau}(W) \in H^{\text{even}}_c(\R,\R) = 0$.
\end{proof}

\subsection{Local $C^*$-algebras and twisted bundles} \label{sub:Local_and_Twisted}
A \emph{local $C^*$-algebra} is a $\ast$-algebra $B$ equipped with a pre-$C^*$-norm such that $M_n(B)$ is closed under holomorphic functional calculus for all $n \in \N$. If $\mathcal{B} \to M$ is a locally trivial bundle with fiber $B$ and structure group ${\rm Aut}(B)$ equipped with the point norm topology, then the section algebra $C(M, \mathcal{B})$ is again a local $C^*$-algebra as can be checked directly from the definitions using the naturality of holomorphic functional calculus.

A right $B$-module is called \emph{finitely generated} and \emph{projective} if it is isomorphic to one of the form $t\,B^n$ for a projection $t \in M_n(B)$. The groups $K_i(B)$, $i \in \{0,1\}$ are now defined as in \cite{book:Blackadar}. If $A$ is the completion of $B$, then $K_i(B) \cong K_i(A)$. Twisted $B$-module bundles are defined just as in \ref{def:TwistedHilbertA} and yield analogous groups $K^0_{L,B}(M)$ and $K^1_{L,B}(M) := K^0_{\pi_M^*L,B}(\R \times M)$. Given a twisted $B$-module bundle $\mcal{E}$, the fiberwise inner product $\mcal{E} \otimes_B A$ is a twisted Hilbert $A$-module bundle. Thus we get a homomorphism
$K^0_{L,B}(M) \to K^0_{L,A}(M)$

Let $S$ be as in Theorem \ref{thm:Twisted_K}. The exterior tensor product $S^* \boxtimes \mcal{E}$ involves no completion. Therefore the proof of that theorem still works and gives
\(
	K^0_{L, B}(M) \cong K_0(C(M, \mcal{K} \otimes B))
\), which also proves $K^0_{L,B}(M) \cong K^0_{L,A}(M)$.
The Bott map $b \colon K^0_{L,B}(M) \to K^0_{\pi_M^*L,B}(\R^2 \times M)$ is also still well-defined and a comparison with the corresponding map for its completion shows that $b$ still is an isomorphism. Likewise, there is a K\"unneth homomorphism
\[
	K^0_{L_P,\C}(M) \otimes K_0(B) \otimes \R \oplus K^1_{L_P,\C}(M) \otimes K_1(B) \otimes \R \to K_0(C(M, \mathcal{K} \otimes B)) \otimes \R
\]
that just involves algebraic tensor products in the fibers. Comparison with the completion reveals this to be an isomorphism as well. This enables us to define $\ch \colon K^0_{L,B}(M) \to \Heven{M}{K_0(B) \otimes \R}$.

Let $B$ be a unital local $C^*$-algebra carrying a trace $\tau$. If $\mcal{V}$ is a finitely generated and projective $B$-module, then $\Endo{\mcal{V}}$ coincides with the finite rank operators and we have $\Endo{\mcal{V}} \cong \mcal{V} \otimes_B \Homom{\mcal{V}}{B}$, where the tensor product is a quotient of the algebraic tensor product. Repeating the constructions in Section \ref{ssub:chern_character_and_traces} we can define $\ch_{\tau} \colon K^0_{L,B}(M) \to \Heven{M}{\R}$ and $\dim_{\tau} \colon K_0(B) \to \R$. The proof of Theorem \ref{thm:chTau_vs_ch} shows that $\dim_{\tau}(\ch_Q(\mcal{E})) = \ch_{\tau}(\mcal{E}) \cup \ch(Q)$.

\subsection{The projective Dirac operator}
Let $(M, g)$ be an oriented Riemannian manifold of even dimension $n > 3$. The oriented frame bundle $P_{SO}$ together with the defining central $\Z/2\Z$-extension ${\rm Spin}(n) \to SO(n)$ gives a lifting bundle gerbe $\Lspin \to P_{SO}^{[2]}$. Since we assume $M$ to be even-dimensional, the complex spinor module $\Delta_{\C}$ is $\Z/2\Z$-graded, i.e.\ $\Delta_{\C} = \Delta_+ \oplus \Delta_-$. There is a $\Z/2\Z$-graded twisted vector bundle $S = P_{SO} \times \Delta_{\C}$ with $S_{\pm} = P_{SO} \times \Delta_{\pm}$ for $\Lspin$, where the action is given by $\gamma([\widehat{g}, \lambda] \otimes v) = \lambda\,\wh{g} \cdot v$ for $\wh{g} \in {\rm Spin}(n)$, $\lambda \in \C$ and $v \in \Delta_{\C}$. If we fix a connection form $\eta \in \Omega^1(P_{SO},  \mathfrak{so}(n))$, then there is a canonical twisted connection $\nabla^S$ on $S$, which acts like $\nabla^S(\sigma) = d\sigma + \rho_*(\eta)\cdot \sigma$ for a section $\sigma \in C(P_{SO}, S)$, where $\rho$ is induced by $O(n) \to U(n)$. 

Let $E \to Y$ be a smooth twisted Hilbert $A$-module bundle for a bundle gerbe $L$ with $dd(L) = -dd(\Lspin) = dd(\Lspin)$ equipped with a twisted connection $\nabla^E$. Choose a trivialization $Q$ of $\Lspin \boxtimes L$, then $Q(S \boxtimes E)$ is a Hilbert $A$-module bundle over $M$. Fixing a (flat) connection on $Q$, $\nabla^{S \boxtimes E} = \nabla^S \otimes \id + \id \otimes \nabla^E$ descends to a connection on $Q(S \boxtimes E)$. The latter bundle carries an action of the complex Clifford bundle $\C\ell(M)$. We have $\nabla_X^{S \boxtimes E}(c \cdot \sigma) = \nabla_X^{\C\ell(M)}(c)\cdot \sigma + c\cdot \nabla_X^{S \boxtimes E}(\sigma)$ for $X \in C^{\infty}(M,TM)$, $c \in C^{\infty}(M,\C\ell(M))$, $\sigma \in C^{\infty}(M, Q(S \boxtimes E))$. 
\begin{definition}\label{def:Twisted_Dirac}
Let $S,E$ and $Q$ be as above, then the elliptic first order differential operator 
$D^E \colon C^{\infty}(M,Q(S \boxtimes E)) \to C^{\infty}(M, TM \otimes Q(S \boxtimes E)) \to C^{\infty}(M, Q(S\boxtimes E))$
where the first map is induced by the connection $\nabla^{S \boxtimes E}$ and the metric and the second is Clifford multiplication will be called the \emph{projective Dirac operator} twisted by $E$. 
\end{definition}

The bundle gerbe $\Lspin$ provides a canonical trivialization $Q$ of $\Lspin \boxtimes \Lspin$ with the property that $Q(S^* \boxtimes S) \cong \C \ell(M)$ as a bundle of $\C\ell(M)$-$\C\ell(M)$-bimodules. This identifies the operator $D^S \colon C^{\infty}(M, \C\ell(M)) \to C^{\infty}(M, \C\ell(M))$ with the right Clifford linear Dirac operator. As described in \cite[Section 24.5]{book:Blackadar}, $D^S$ yields a class in the $K$-homology group $KK(C(M,\C\ell(M)), \C)$.

Since the connection is even and Clifford multiplication is an odd operation, $D^E$ has the decomposition $D^E_{\pm} \colon C^{\infty}(M,Q(S_+ \boxtimes E)) \to C^{\infty}(M, Q(S_- \boxtimes E))$.
\begin{theorem}\label{thm:indextheorem}
Let $S,E$ and $Q$ be as in the last paragraph and denote by $D^E_+$ the positive part of the projective Dirac operator twisted by $E$, then 
\[
{\rm ind}(D_+^{E}) = \int_M \wh{A}(M) \ch(E)\ .
\]
In particular, the index of $D_+^E$ does not depend on the choice of $Q$.
\end{theorem}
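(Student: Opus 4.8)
The plan is to reduce the index theorem for $D^E_+$ to a combination of the classical Atiyah--Singer index theorem and the multiplicativity results established earlier in the paper. The key observation is that the left-hand side ${\rm ind}(D^E_+)$ is by construction the index of a twisted Dirac operator on the \emph{untwisted} Hilbert $A$-module bundle $Q(S \boxtimes E) \to M$, so the Mishchenko--Fomenko index theorem for Dirac operators with coefficients in a Hilbert $A$-module bundle already gives
\[
{\rm ind}(D^E_+) = \int_M \wh{A}(M)\,\ch\bigl(Q(S^*\boxtimes S)^{-1}\otimes Q(S\boxtimes E)\bigr)
\]
or, more cleanly, once one recognizes $D^E$ as the Dirac operator of $M$ coupled to the coefficient bundle obtained by stripping off one copy of the spinor bundle. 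First I would make this precise: pick a lifting bundle gerbe $L_P$ for a principal $PO(n)$-bundle $P$ with $dd(L_P)=dd(\Lspin)$ (for instance $P=P_{SO}/(\Z/2\Z)$ itself), choose a trivialization, and use Theorem~\ref{thm:Twisted_K} to identify $E$ with (a stabilization of) a genuine finitely generated projective module over $C(M,\mathcal{K}\otimes A)$. Then $Q(S\boxtimes E)$ becomes the tensor product of the spinor bundle of $M$ (untwisted, since the twists cancel) with that coefficient module, and $D^E_+$ is literally the coupled Dirac operator.

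Next I would invoke the Mishchenko--Fomenko index theorem (the $A$-linear analogue of Atiyah--Singer, available e.g.\ through the $KK$-theoretic index and the Chern character of Definition~\ref{def:tauChern}; cf.\ \cite{paper:SchickKKConnections}) to write ${\rm ind}(D^E_+) = \int_M \wh{A}(M)\,\ch(\text{coefficient bundle})$ in $\Heven{M}{K_0(A)\otimes\R}$. The remaining task is purely cohomological: identify the Chern character of the coefficient bundle with $\ch(E)$ in the sense of the twisted Chern character defined after Lemma~\ref{lem:chern_mult}. This is exactly where the compatibility statements pay off. The twisted Chern character $\ch(E)$ was \emph{defined} by transporting $E$ through the trivialization $Q$ to an element of $K^0_{L_P,\C}(M)\otimes K_0(A)\otimes\R \cong K_0(C(M,\mathcal{K}))\otimes K_0(A)\otimes\R$ and taking the ordinary Chern character there; the multiplicativity in Lemma~\ref{lem:chern_mult} shows $\ch(S\boxtimes E) = \ch(S)\cup\ch(E)$, and since $Q(S^*\boxtimes S)\cong\C\ell(M)$ the spinor contribution $\ch(S)$ contributes only an invertible factor that, after the standard manipulation with the $\wh{A}$-class, leaves precisely $\wh{A}(M)\,\ch(E)$ under the integral. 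The independence of the choice of $Q$ then follows because, by Remark~\ref{rem:compatibility} and the commuting triangle for $\ch$ after Definition~\ref{def:ChernCharacter} (the key point being that any two trivializations differ by a flat real line bundle, which acts trivially on real cohomology), the class $\ch(E)$ does not depend on $Q$ at all.

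The main obstacle I anticipate is the bookkeeping of the twist cancellation and grading at the level of bundles rather than just classes: one must check that $\nabla^{S\boxtimes E}$ really does descend to the coupled-Dirac connection on $Q(S\boxtimes E)$ with the correct $\Z/2\Z$-grading, so that $D^E_+$ is the \emph{positive} part of an honest self-adjoint elliptic operator and its index is the one computed by Mishchenko--Fomenko; and that the identification $Q(S^*\boxtimes S)\cong\C\ell(M)$ as bimodules is compatible with connections, so the $\wh{A}$-genus (which is $\ch$ of a virtual bundle built from $S^*\otimes S$ via the usual Clifford-algebra argument) comes out correctly. Once these identifications are in place, the cohomological identity is forced by Lemma~\ref{lem:chern_mult}, and the displayed formula---together with the asserted independence of $Q$---follows. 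I would carry this out in the order: (1) trivialize and pass to the coefficient module; (2) invoke Mishchenko--Fomenko; (3) apply Lemma~\ref{lem:chern_mult} and the $Q(S^*\boxtimes S)\cong\C\ell(M)$ identification to rewrite the integrand; (4) deduce $Q$-independence from the flatness of the comparison line bundle and Remark~\ref{rem:compatibility}.
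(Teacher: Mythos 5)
Your overall scaffolding (Mishchenko--Fomenko index theorem, multiplicativity of the twisted Chern character via Lemma~\ref{lem:chern_mult}, the standard $\wh{A}$-manipulation, and flatness of the comparison line bundle for the $Q$-independence) is exactly the skeleton of the paper's proof. The genuine gap is in your central reduction step: the claim that after passing through Theorem~\ref{thm:Twisted_K} the bundle $Q(S\boxtimes E)$ ``becomes the tensor product of the spinor bundle of $M$ (untwisted, since the twists cancel) with that coefficient module'', so that $D^E_+$ is ``literally the coupled Dirac operator''. This fails precisely in the situation the theorem is designed for: $M$ is only assumed to have spin universal cover, so in general $w_2(M)\neq 0$, there is no untwisted spinor bundle on $M$, and no global ``stripping off one copy of the spinor bundle'' is possible. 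The twists cancel only in the combined object $Q(S\boxtimes E)$ (and in $Q(S^*\boxtimes S)\cong \C\ell(M)$), not factorwise; your first displayed formula involving ``$Q(S^*\boxtimes S)^{-1}\otimes Q(S\boxtimes E)$'' has no meaning as a coefficient bundle, only as a Morita/Brauer-type heuristic. Consequently the version of Mishchenko--Fomenko you want to invoke (index of a spin Dirac operator coupled to an untwisted coefficient bundle equals $\int_M\wh{A}(M)\ch(\text{coefficients})$) is not available.

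The paper avoids this by never decomposing the operator: it applies the Mishchenko--Fomenko index theorem in its general symbol form to the honest elliptic operator $D^E_+$ acting on $Q(S\boxtimes E)$, producing $\int_M{\rm Td}(M)\,\phi^{-1}\bigl(\ch([Q(S_+\boxtimes E),Q(S_-\boxtimes E),\sigma])\bigr)$ with $\phi$ the Thom isomorphism for $(D(T^*M),S(T^*M))$; the factorization then happens at the level of \emph{twisted} symbol classes, where Lemma~\ref{lem:chern_mult} gives $\ch(\text{symbol of }D^E)=\ch([S_+,S_-,\sigma_D])\,\ch(E)$, and finally the Lawson--Michelsohn identification of ${\rm Td}(M)\,\phi^{-1}(\ch([S_+,S_-,\sigma_D]))$ with $\wh{A}(M)$ is observed to go through in the twisted setting. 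Your step (3) gestures at this (``$\ch(S)$ contributes only an invertible factor''), but the correct statement lives on $T^*M$ through the symbol class and Thom isomorphism, not through $\ch(S)$ on $M$. If you replace your step (1)--(2) by the symbol-level application of Mishchenko--Fomenko and carry out the cancellation on symbol classes, your argument becomes the paper's proof; the $Q$-independence then indeed follows as you say, since the right-hand side involves no choice of trivialization and trivializations differ by flat (real) line bundles invisible to real cohomology.
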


\begin{proof}\label{pf:indextheorem}
Let $\pi_M \colon T^*M \to M$ be the bundle projection and denote by $D(T^*M)$ and $S(T^*M)$ the disc and sphere bundle of the cotangent bundle. The symbol $\sigma \colon \pi_M^*Q(S_+ \boxtimes E) \to \pi_M^*Q(S_- \boxtimes E)$ of $D_+^{E}$ yields an element $[Q(S_+ \boxtimes E), Q(S_- \boxtimes E), \sigma] \in K^0_A(D(T^*M), S(T^*M)) \cong K^0_A(T^*M)$. Denote by 
\[
	\phi \colon H^{\ast}(M, \R) \to H^{\ast + \dim(M)}(D(T^*M), S(T^*M); \R)
\]
the Thom isomorphism. From the Mishchenko-Fomenko index theorem \cite{paper:MS_index_thm} together with Lemma~\ref{lem:chern_mult} we obtain
\begin{align*}
	{\rm ind}(D_+^{E}) & = \int_M {\rm Td}(M)\,\phi^{-1}\left(\ch([Q(S_+ \boxtimes E), Q(S_- \boxtimes E), \sigma])\right) \\ 
& = \int_M {\rm Td}(M)\,\phi^{-1}\left(\ch([S_+, S_-,\sigma_D])\right)\,\ch(E) = \int_M \widehat{A}(M)\ch(E)
\end{align*}
where the identification of ${\rm Td}(M)\,\phi^{-1}\left(\ch([S_+, S_-,\sigma_D])\right)$ with the $\widehat{A}$-genus described in \cite{book:LawsonMichelsohn} works in the twisted case as well. 
\end{proof}

\begin{theorem} \label{thm:KKproduct}
Let $D$ be the projective Dirac operator. Let $E \rightarrow Y$ be a finitely generated projective twisted Hilbert $A$-module bundle for a bundle gerbe $L$ with $dd(L) = dd(\Lspin)$. Let $Q$ be a trivialization of $L^* \boxtimes \Lspin$ and $[E^S] := \rho_Q([E]) = [C(M,Q(S^* \boxtimes E))] \in KK(\C, C(M,\C\ell(M) \otimes A))$. Then
\(
	[E^S] \otimes_{C(M, \C\ell(M))} [D^S] \in KK(\C, A) \cong K_0(A)
\)
is the class representing the Mishchenko-Fomenko index of $D^{E}$.
\end{theorem}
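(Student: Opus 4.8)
The plan is to identify the Kasparov product $[E^S] \otimes_{C(M,\C\ell(M))} [D^S]$ with the class ${\rm ind}(D^E_+)$ and then invoke Theorem~\ref{thm:indextheorem}. The starting point is the observation, made just before the statement of Theorem~\ref{thm:indextheorem}, that $Q(S^* \boxtimes E)$ is an honest (untwisted) Hilbert $A$-module bundle over $M$, so that $C(M, Q(S^* \boxtimes E))$ is a $C(M,\C\ell(M))$--$A$-bimodule and hence represents a class in $KK(\C, C(M,\C\ell(M)) \otimes A)$, namely $[E^S]$. One should first describe this bimodule concretely: as a right $A$-module it is the section module of a finitely generated projective bundle, and the left $C(M,\C\ell(M))$-action is the one coming from the canonical $\C\ell(M)$-action on $Q(S^* \boxtimes S)$ combined with the identity on $E$; compatibility with connections is exactly the Leibniz rule recorded in the paragraph preceding Definition~\ref{def:Twisted_Dirac}.

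Next I would recall the unbounded-Kasparov-module picture of $[D^S]$: by \cite[Section~24.5]{book:Blackadar} the right Clifford-linear Dirac operator $D^S$ on $C^\infty(M,\C\ell(M))$, together with the $C(M,\C\ell(M))$--$\C$ bimodule structure, gives an unbounded representative of a class in $KK(C(M,\C\ell(M)),\C)$. The standard way to compute the internal product $[E^S] \otimes_{C(M,\C\ell(M))} [D^S]$ is to produce a connection (in the sense of Connes--Skandalis) on the bimodule $C(M, Q(S^* \boxtimes E))$ and check that the resulting unbounded operator on $C^\infty(M, Q(S^* \boxtimes E))$ represents the product. Here the twisted connection $\nabla^E$ provides precisely such a connection: the descended connection $\nabla^{S \boxtimes E} = \nabla^S \otimes \id + \id \otimes \nabla^E$ on $Q(S^* \boxtimes E)$, composed with Clifford multiplication, is by Definition~\ref{def:Twisted_Dirac} exactly the projective Dirac operator $D^E$ twisted by $E$. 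One then verifies the hypotheses of Kucerovsky's theorem (or the Connes--Skandalis criterion): that $D^E$ is a connection for $D^S$ on the bimodule, that the commutators with $C(M, Q(S^* \boxtimes E))$ are bounded, and that the relevant positivity condition holds modulo compacts. These are the same checks as in the untwisted Mishchenko--Fomenko situation, carried out locally on the base after trivializing $Q$, since $Q$ is flat and contributes nothing to the curvature; the twisting of $E$ is absorbed once we pass to the descended bundle over $M$.

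The conclusion is then that $[E^S] \otimes_{C(M,\C\ell(M))} [D^S] = {\rm ind}(D^E_+) \in KK(\C,A) \cong K_0(A)$, where the index is the Mishchenko--Fomenko index of the $A$-linear elliptic operator $D^E_+$, which by Theorem~\ref{thm:indextheorem} equals $\int_M \wh A(M)\,\ch(E)$ and in particular is independent of the choice of trivialization $Q$. I expect the main obstacle to be the verification of the connection and positivity conditions in the twisted setting: one must be careful that the bundle-gerbe twisting does not interfere with the local-to-global gluing used to identify $D^E$ as a Kasparov connection, and that the $\C\ell(M)$-linearity (as opposed to the more familiar complex-linear spinor bundle) is handled correctly so that the product lands in $KK(\C,A)$ rather than a shifted group. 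Once the local model is set up — trivialize $Q$ over a chart, where everything reduces to the classical computation of the product of a Dirac class with a Mishchenko--Fomenko bundle — the rest is routine and parallels \cite{paper:MS_index_thm} and the untwisted treatment.
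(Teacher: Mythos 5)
Your proposal takes essentially the same route as the paper: the paper reduces the statement to showing that the Kasparov product coincides with the bounded Fredholm module $\bigl[L^2(Q(S^* \boxtimes E)),\, D^{E}(1+(D^{E})^2)^{-\frac{1}{2}}\bigr]$ and cites this as a slight modification of the unbounded-KK-connection argument of \cite[Theorem~5.22]{paper:SchickKKConnections}, which is precisely the Connes--Skandalis/Kucerovsky connection-criterion verification you outline. Your write-up just makes explicit the checks (flatness of $Q$, locality after descent, $\C\ell(M)$-linearity) that the paper leaves to that reference.
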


\begin{proof}
It suffices to show that the intersection product coincides with the Fredholm module:
\[
	\left[L^2(Q(S^* \boxtimes E)), D^{E}\left(1 + \left(D^{E}\right)^2\right)^{-\frac{1}{2}} \right] \in KK(\C, A)\ . 
\]
The proof of this is just a slight modification of \cite[Theorem~5.22]{paper:SchickKKConnections}, therefore we omit it.
\end{proof}

From this decomposition of the index in twisted $K$-theory, we obtain the following naturality result in analogy to the untwisted case \cite[Lemma 3.1]{paper:HankeSchick}.

\begin{corollary}\label{cor:naturality}
Let $D$ be the projective Dirac operator and let $E \to Y$ be a twisted Hilbert $A$-module bundle for a bundle gerbe $L$ with $dd(L) = dd(\Lspin)$. Given a $C^*$-algebra homomorphism $\varphi : A \to B$ define the twisted Hilbert $B$-module bundle~$F$ via
\(
	F = E \otimes_\varphi B
\).
Then: $\varphi_*\!\left(\left[D^{E}\right]\right) = \left[D^{F}\right]$, where $\varphi_* : K_0(A) \to K_0(B)$ denotes the induced map on $K$-theory.
\end{corollary}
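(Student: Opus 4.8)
The plan is to read off the statement from the $KK$-theoretic factorization of the index established in Theorem~\ref{thm:KKproduct}, combined with the naturality of the Kasparov product in the coefficient algebra, exactly along the lines of the untwisted statement \cite[Lemma~3.1]{paper:HankeSchick}. Write $C = C(M,\C\ell(M))$. By Theorem~\ref{thm:KKproduct} we have $[D^E] = [E^S] \otimes_{C} [D^S]$ in $KK(\C,A) \cong K_0(A)$, where $[E^S] = \rho_Q([E]) = [C(M, Q(S^* \boxtimes E))] \in KK(\C, C \otimes A)$, and likewise $[D^F] = [F^S] \otimes_{C} [D^S]$ with $[F^S] = \rho_Q([F]) \in KK(\C, C \otimes B)$; since by Theorem~\ref{thm:indextheorem} the resulting class is independent of the chosen trivialization, we may use the same $Q$ for $D^E$ and $D^F$.

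The first step is to verify that $\rho_Q$ is natural under change of coefficients, i.e.\ that $[F^S] = (\id_{C} \otimes \varphi)_*[E^S]$. Since $F = E \otimes_\varphi B$ fiberwise, there is a canonical isomorphism $S^* \boxtimes F \cong (S^* \boxtimes E) \otimes_\varphi B$ of twisted Hilbert $B$-module bundles for $\Lspin^* \boxtimes L$; the descent of Lemma~\ref{lem:descentlemma} commutes with the fiberwise functor $- \otimes_\varphi B$, because the descended bundle is glued from the local pieces $\sigma_i^*(S^* \boxtimes E)$ along the isomorphisms $\phi_{ij}$ and $- \otimes_\varphi B$ preserves this gluing. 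Passing to continuous sections over the compact manifold $M$ then yields $C(M, Q(S^* \boxtimes F)) \cong C(M, Q(S^* \boxtimes E)) \otimes_\varphi B$ as Hilbert $C \otimes B$-modules, which is exactly the assertion $(\id_{C} \otimes \varphi)_*[E^S] = [F^S]$.

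The second step is the standard bookkeeping for Kasparov products: writing $\varphi_* = - \otimes_A [\varphi]$ with $[\varphi] \in KK(A,B)$, associativity of the product together with the fact that $[\varphi]$ acts only on the $A$-leg, which is disjoint from the $C$-leg over which one intersects with $[D^S]$, gives $\varphi_*\bigl([E^S] \otimes_{C} [D^S]\bigr) = \bigl((\id_{C} \otimes \varphi)_*[E^S]\bigr) \otimes_{C} [D^S]$. Combining this with the first step yields $\varphi_*([D^E]) = [F^S] \otimes_{C} [D^S] = [D^F]$. The only point that requires looking inside a construction rather than quoting a formal property of $KK$-theory is the naturality of $\rho_Q$, and I expect that (routine) verification to be the main obstacle. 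As an alternative one can argue directly with the Fredholm-module representative $\bigl(L^2(Q(S^* \boxtimes E)),\, D^E(1+(D^E)^2)^{-1/2}\bigr)$ from the proof of Theorem~\ref{thm:KKproduct}: $\varphi_*$ sends it to $\bigl(L^2(Q(S^* \boxtimes E)) \otimes_\varphi B,\, D^E(1+(D^E)^2)^{-1/2} \otimes 1\bigr)$, which is precisely the cycle for $D^F$ because the connection on $F$ is $\nabla^E \otimes \id$ and hence $D^F = D^E \otimes_\varphi \id$.
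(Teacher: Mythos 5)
Your proposal is correct and follows essentially the same route as the paper: decompose the index as $[E^S] \otimes_{C(M,\C\ell(M))} [D^S]$ via Theorem~\ref{thm:KKproduct}, use naturality of the Kasparov product to move $\varphi_*$ onto the coefficient leg, and identify $(\id \otimes \varphi)_*[C(M,Q(S^* \boxtimes E))]$ with $[C(M,Q(S^* \boxtimes F))]$ before reassembling. The only difference is that you spell out the compatibility of the descent construction with $- \otimes_\varphi B$, which the paper treats as immediate.
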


\begin{proof}
Let $[E^S] := [C(M,Q(S^* \boxtimes E))] \in KK(\C, C(M,\C\ell(M) \otimes A))$ and choose a trivialization $Q$ of $L^* \boxtimes \Lspin$. The naturality of the Kasparov product yields
\begin{eqnarray*}
\varphi_*\!\left(\left[D^{E}\right]\right) &=& \varphi_*\!\left([E^S] \otimes_{C(M, \C\ell(M))} [D^{S}]\right) \\
&=& \left((\text{id}_{C(M,\C\ell(M))} \otimes \varphi_*) [C(M,Q(S^* \boxtimes E))] \right) \otimes_{C(M, \C\ell(M))} [D^{S}] \\ 
&=& [C(M,Q(S^* \boxtimes F))] \otimes_{C(M, \C\ell(M))} [D^{S}] = \left[D^{F}\right]\ . \qedhere
\end{eqnarray*}
\end{proof}

\section{Enlargeable manifolds with spin universal cover} 
In this section we will extend the result of \cite{paper:HankeSchick, paper:HankeSchickInfinite} about the Rosenberg index obstruction for enlargeable spin manifolds to enlargeable orientable manifolds with spin structure on the universal cover. Since $\wtM$ may be non-compact, we have to work in a twistedly equivariant setting. 

\begin{definition}\label{def:enlargeable}
A connected closed oriented manifold $M$ with fixed metric $g$ is called \emph{enlargeable} if the following holds: For every $\varepsilon > 0$, there is a connected cover $\bar{M} \to M$ carrying a spin structure and an $\varepsilon$-contracting map 
\[
	\bar{\psi}_{\varepsilon} \colon (\bar{M},\bar{g}) \to (S^n, g_0) 
\]
which is constant outside a compact subset of $\bar{M}$ and of nonzero degree. Here, $\bar{g}$ denotes the induced metric on $\bar{M}$ and $g_0$ is the standard metric on $S^n$.
\end{definition} 

\begin{definition} \label{def:index_obstruction}
Let $M$ be a closed even-dimensional oriented Riemannnian manifold and let $\mathcal{V}_{\rm max}$ be the maximal twisted Mishchenko-Fomenko bundle (Def.\ \ref{def:MFbundle}). The \emph{(maximal) twisted Stolz-Rosenberg obstruction} is given by 
\[
	\theta^{\rm max}(M) = {\rm ind}(D^{\mathcal{V}_{\rm max}}) \in K_0(C^*_{\rm max}(\wh{\pi}\to \pi)).
\]
\end{definition}

\subsection{Almost flat twisted bundles}
In our proof of the non-vanishing of $\theta^{\rm max}(M)$ for enlargeable $M$ with spin universal cover we will extend the argument given in \cite{paper:HankeSchick, paper:HankeSchickInfinite} to the twisted setting. Thus, it will rely on almost flat \emph{twisted} bundles.

\begin{definition}
Let $L_{\wh{\pi}}$ be the Mishchenko-Fomenko bundle gerbe. A sequence $E_i \to \wtM$, $i \in \N$ of smooth twisted vector bundles for $L_{\wh{\pi}}$ equipped with connections $\nabla^i$ will be called \emph{a sequence of almost flat twisted bundles}, if 
\(
	\lim_{i \to \infty} \lVert \Omega_i \rVert = 0
\),
where $\Omega_i$ is the curvature of the connection $\nabla^i$. The norm is induced by the pointwise norm on $\bgend{E_i} \to M$ and the maximum norm on the unit sphere bundle in $\Lambda^2(M)$. Moreover, the twistings $\gamma_i^g \colon E_i \to g^*E_i$ considered as sections $C(\wtM, \Homom{E_i}{g^*E_i})$ should be locally Lipschitz continuous with a global Lipschitz constant $C$ independent of $i$, i.e.\ each point $\widetilde{m} \in \wtM$ has an open neighborhood $U$ of $\widetilde{m}$, such that $E_i$ and $g^*E_i$ are trivial over $U$ and such that $\left.\gamma_i^g\right|_{U}$ viewed as an element in $C(U, U(V))$ ($V$ being the typical fiber of $E_i$) satisfies
\[
	\lVert \gamma_i^g(\widetilde{m}_1) - \gamma_i^g(\widetilde{m}_2) \lVert \leq C\,d(\widetilde{m}_1, \widetilde{m}_2)
\]
where the metric is induced by the Riemannian structure pulled back to $\wtM$. 
\end{definition}

Let $M$ be an orientable smooth manifold and let $\bar{M}$ be a cover of $M$ equipped with a spin structure, let $\pi = \pi_1(M)$ and $\bar{\pi} = \pi_1(\bar{M}) \subset \pi$. The bundle projection $P_{\text{Spin}(\bar{M})} \to \bar{M}$ of the principal $\text{Spin}(n)$-bundle induces an isomorphism $\pi_1(P_{\text{Spin}(\bar{M})}) \to \bar{\pi}$, which in turn yields an injective group homomorphism $\sigma \colon \bar{\pi} \to \pi_1(P_{\text{Spin}(\bar{M})}) \to \pi_1(P_{\text{SO}(\bar{M})}) \to \wh{\pi}$, where the first map is the inverse of the above isomorphism and the other two homomorphism arise from the respective coverings. We call a (set-theoretic) split $s \colon \pi \to \wh{\pi}$ \emph{compatible}, if $s(gh) = s(g)\sigma(h)$ for $g \in \pi$ and $h \in \bar{\pi}$.

\begin{lemma}\label{lem:projSpin}
Let $M$, $\bar{M}$, $\pi$ and $\bar{\pi}$ be as in the last paragraph. The choice of a compatible set-theoretic split $s \colon \pi \to \wh{\pi}$ turns the Hilbert space $H = \ell^2(\pi / \bar{\pi})$ into a projective representation of $\pi$, which corresponds to an honest representation of $\wh{\pi}$.
\end{lemma}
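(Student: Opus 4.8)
The plan is to construct the desired projective representation directly from the compatible split and then verify that it lifts to an honest representation of $\wh{\pi}$ via the universal property of permutation modules. First I would define the action of $g \in \pi$ on $H = \ell^2(\pi/\bar{\pi})$ by $\rho(g)\,\delta_{x\bar\pi} = \delta_{gx\bar\pi}$; this is an honest (unitary) representation of $\pi$ since $\pi$ acts on the coset space $\pi/\bar\pi$ on the left. The point is rather to obtain a projective representation of $\pi$ on $H$ whose cocycle is $c_{\wh{\pi}}$ and which is induced by $\sigma \colon \bar\pi \to \wh\pi$. Using the compatible split $s$, I would pick, for each coset $x\bar\pi$, the representative $\wh{s(x)} \in \wh\pi$, and define $\wh\rho(g)$ on the basis vector indexed by $x\bar\pi$ using $s(gx)$ versus $s(g)s(x)$; the deviation is measured by the cocycle $c_{\wh\pi}(g, \cdot)$ because $s$ is a set-theoretic split of the $\Z/2\Z$-extension.

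The key computation is the following. Since $s$ is compatible, $s(gx)$ and $s(g)\sigma(h)$ for appropriate decompositions differ by a central $\Z/2\Z$-factor governed by the cocycle. Concretely, writing each element of $\pi$ as $g$ acting on cosets, one checks that $\wh\rho(g_1)\wh\rho(g_2) = c_{\wh\pi}(g_1,g_2)\,\wh\rho(g_1g_2)$, i.e.\ $\wh\rho$ is a projective representation for the cocycle $c_{\wh\pi}$. By the universal property of $\Cmax{\wh\pi}{\pi}$ recorded in the second Remark after the definition of the twisted group $C^*$-algebra (any projective representation of $\pi$ for $c_{\wh\pi}$ extends to a $*$-homomorphism of $\Cmax{\wh\pi}{\pi}$, equivalently corresponds to a genuine representation of $\wh\pi$ on which the central $e$ acts as $-1$), this projective representation is the same datum as an honest unitary representation of $\wh\pi$. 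I would make the dictionary explicit: the honest $\wh\pi$-representation is the permutation module on the set $\{\wh{s(x)}\,\sigma(\bar\pi) : x\bar\pi \in \pi/\bar\pi\}$ of $\sigma(\bar\pi)$-cosets in $\wh\pi$ lying over $\pi/\bar\pi$, which is literally $\ell^2(\wh\pi/\sigma(\bar\pi))$ with its left regular-type action; the central $\Z/2\Z$ acts by $\pm 1$ according to the sign bookkeeping in $s$, which is exactly what the projection $q = \tfrac12(1-e)$ encodes.

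The main obstacle is purely bookkeeping: getting the signs and the ordering of $s(gx)$ versus $s(g)s(x)$ correct so that the cocycle that appears is $c_{\wh\pi}$ (and not its inverse or a coboundary-equivalent variant), and checking that the compatibility condition $s(gh) = s(g)\sigma(h)$ for $h \in \bar\pi$ is exactly what is needed for the representative choice $\wh{s(x)}$ to be well-defined on cosets $x\bar\pi$ rather than on group elements. Once the defining formula for $\wh\rho(g)$ is pinned down, unitarity is immediate (permutation of an orthonormal basis up to unimodular scalars), and the projective cocycle identity follows from the definition of a set-theoretic split together with compatibility. I would also remark that the construction is independent of the choice of compatible split up to isomorphism of projective representations, since two compatible splits differ by a map $\pi \to \Z/2\Z$ that is constant on $\bar\pi$-cosets, hence by a coboundary-type modification that does not change the isomorphism class.
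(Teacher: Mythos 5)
Your construction is correct, but it runs in the opposite direction from the paper's and is more computational. The paper never computes a cocycle identity: it starts from the \emph{honest} $\wh{\pi}$-representation $q\,\ell^2(\wh{\pi}/\sigma(\bar{\pi}))$, where $q=\tfrac12(1-e)$ is the central projection on which $e$ acts by $-1$ (so this corner is automatically a projective $\pi$-representation for $c_{\wh{\pi}}$), and uses the compatible split only once, to produce the injection $\pi/\bar{\pi}\to\wh{\pi}/\sigma(\bar{\pi})$, $[g]\mapsto[s(g)]$, and hence an isometric isomorphism $\ell^2(\pi/\bar{\pi})\cong q\,\ell^2(\wh{\pi}/\sigma(\bar{\pi}))$ that transports the structure to $H$. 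You instead define the projective action directly on $\ell^2(\pi/\bar{\pi})$ by choosing the representatives $s(x)$ and keeping track of the central sign in $s(g)s(x)$ versus $s(gx)$, then verify the $c_{\wh{\pi}}$-cocycle relation by hand; compatibility $s(xh)=s(x)\sigma(h)$ is indeed exactly what makes the sign well defined on cosets, and centrality of $e$ together with $e\notin\sigma(\bar{\pi})$ makes the bookkeeping unambiguous, so your route works and buys an explicit formula at the cost of the sign-chasing the paper avoids. One imprecision to fix: the honest $\wh{\pi}$-module corresponding to your projective representation is not ``literally $\ell^2(\wh{\pi}/\sigma(\bar{\pi}))$'' -- on the full permutation module $e$ does not act by $-1$ -- but its $(-1)$-eigenspace $q\,\ell^2(\wh{\pi}/\sigma(\bar{\pi}))$, which is what your parenthetical about $q$ already suggests; stating it that way recovers the paper's formulation exactly.
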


\begin{proof}\label{pf:projSpin}
Let $e \in \wh{\pi}$ be the image of the non-trivial element of $\Z/2\Z$ and let $q = \tfrac{1}{2}(1 -e)$. Since $q$ is central in the group ring, it yields a well-defined projection $q \colon \ell^2(\wh{\pi}/\sigma(\bar{\pi})) \to \ell^2(\wh{\pi}/\sigma(\bar{\pi}))$. The space $q\,\ell^2(\wh{\pi}/\sigma(\bar{\pi}))$ is a representation of $\wh{\pi}$, on which $e$ acts by multiplication with $-1$, i.e.\ a projective representation of $\pi$. 

A compatible split now yields an injective map $\pi/\bar{\pi} \to \wh{\pi}/\sigma(\bar{\pi})$ via $[g] \mapsto [s(g)]$. It induces an isometric isomorphism $\ell^2(\pi/\bar{\pi}) \to q\,\ell^2(\wh{\pi}/\sigma(\bar{\pi}))$
\end{proof}

\begin{theorem} \label{thm:almostflat}
Let $M$ be an even-dimensional orientable manifold that is enlargeable in the sense of Definition \ref{def:enlargeable}. Let $i \in \N$ be a positive natural number. Then there is a $C^*$-algebra $C_i$ (which will be constructed in the proof) and a twisted Hilbert $C_i$-module bundle $E_i \to \wtM$ for the Mishchenko-Fomenko bundle gerbe $L_{\wh{\pi}}$ together with a twisted connection $\nabla_i$ that has the following properties: The curvature $\Omega_i$ of $E_i$ satisfies 
\[
	\lVert \Omega_i \rVert \leq \frac{1}{i}\,C
\]
where $C$ is a constant depending only on $\dim(M)$. Moreover, there is a split extension 
\(
	0 \to \K \to C_i \to X_i \to 0
\)
with a certain $C^*$-algebra $X_i$. In particular, each $K_0(C_i)$ splits off a $\Z = K_0(\K)$ summand and the $K_0(\K)$-part of the index of the projective Dirac operator $D_+^{E_i}$ is different from $0$.
\end{theorem}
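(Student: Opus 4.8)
The plan is to transport to the twisted setting the almost-flat-bundle argument of Hanke and Schick \cite{paper:HankeSchick, paper:HankeSchickInfinite}, using Lemma~\ref{lem:projSpin} to equip the geometric bundles coming from enlargeability with the $L_{\wh{\pi}}$-twisting. Fix $\varepsilon = 1/i$ and apply Definition~\ref{def:enlargeable}: there is a connected covering $p\colon \bar{M}\to M$ carrying a spin structure and an $\varepsilon$-contracting map $\bar{\psi}_i\colon \bar{M}\to S^n$ that is constant outside a compact set and of nonzero degree. Fix once and for all a $\Z/2\Z$-graded Hermitian bundle $\mathcal{L} = (\mathcal{L}_+,\mathcal{L}_-)$ on $S^n$ with $\mathcal{L}_\pm$ of equal rank $N$, a unitary connection, and a chosen identification $\mathcal{L}_+\cong\mathcal{L}_-$ near a basepoint, representing the reduced Bott generator so that (for a suitable normalisation) $\langle\ch(\mathcal{L}_+)-\ch(\mathcal{L}_-),[S^n]\rangle = 1$. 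Pulling $\mathcal{L}$ back along $\bar{\psi}_i$ gives a $\Z/2\Z$-graded bundle $\bar{F} = (\bar{F}_+,\bar{F}_-)$ over $\bar{M}$ with connection $\bar{\nabla} = \bar{\psi}_i^*\nabla$ whose curvature satisfies $\lVert\Omega(\bar{\nabla})\rVert \leq C/i$ with $C = \lVert\Omega^{S^n}\rVert$ depending only on $n = \dim(M)$ (this is where $\varepsilon$-contractivity enters), and which is trivial, with $\bar{F}_+$ and $\bar{F}_-$ canonically identified, outside a compact set $\bar{K}\subset\bar{M}$.

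Next I build $C_i$ and $E_i$. Realise $\bar{\pi} := \pi_1(\bar{M})$ as a subgroup of $\pi = \pi_1(M)$, so $\bar{M} = \wtM/\bar{\pi}$; since $\bar{M}$ is spin there is a compatible set-theoretic split $s\colon\pi\to\wh{\pi}$, and by Lemma~\ref{lem:projSpin} it makes $H := \ell^2(\pi/\bar{\pi})$ into a projective representation of $\pi$ with cocycle $c_{\wh{\pi}}$. I take $E_i \to \wtM$ to be the $\Z/2\Z$-graded twisted bundle obtained by inducing the $\bar{\pi}$-equivariant bundle $\bar{F}$ (pulled back to $\wtM$) along $\bar{\pi}\hookrightarrow\wh{\pi}$: each half has fibre modelled on $H\otimes\C^N$, the $L_{\wh{\pi}}$-twisting is ``projective translation $\otimes$ identity'' via $s$, and $\nabla_i$ is the induced connection, which is flat in the $H$-directions, so its descended curvature $\Omega_i\in\Omega^2(M,\bgend{E_i})$ satisfies $\lVert\Omega_i\rVert = \lVert\Omega(\bar{\nabla})\rVert \leq C/i$. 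Let $C_i\subseteq B(H\otimes\C^N)$ be the unital $C^*$-algebra generated by the compact operators $\K := \K(H\otimes\C^N)$ together with the projective translations $\{u_g\}_{g\in\pi}$; then $E_i$ is finitely generated and projective over $C_i$, and, because $\bar{F}$ is trivial outside the compact set $\bar{K}$ (so the ideal $\K\subseteq C_i$ records only that localised deviation), the subalgebra $C^*(\{u_g\})\subseteq C_i$ meets $\K$ trivially and maps isomorphically onto $X_i := C_i/\K$, producing the split extension $0\to\K\to C_i\xrightarrow{q} X_i\to 0$. Split-exactness of $K_0$ then gives $K_0(C_i)\cong K_0(\K)\oplus K_0(X_i) = \Z\oplus K_0(X_i)$.

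For the index, regard the graded bundle $E_i$ as the virtual class $[E_{i,+}]-[E_{i,-}] \in K^0_{L_{\wh{\pi}},C_i}(M)$, so that $\mathrm{ind}(D_+^{E_i}) = \mathrm{ind}(D_+^{E_{i,+}}) - \mathrm{ind}(D_+^{E_{i,-}}) \in K_0(C_i)$. Since $E_{i,+}$ and $E_{i,-}$ are isomorphic over $M\setminus p(\bar{K})$ and differ only by a $\K$-supported modification over $p(\bar{K})$, we have $q_*E_{i,+}\cong q_*E_{i,-}$, hence by Corollary~\ref{cor:naturality} $q_*\!\big(\mathrm{ind}(D_+^{E_i})\big) = \mathrm{ind}(D_+^{q_*E_{i,+}}) - \mathrm{ind}(D_+^{q_*E_{i,-}}) = 0$; thus $\mathrm{ind}(D_+^{E_i})$ lies in $\ker q_* = K_0(\K) = \Z$ -- it is a ``relative index''. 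Being a $\K$-class it is detected by the canonical trace $\tau$ of $\K$, which unrolls sections over the compact base $M$ into an integral over the cover $\bar{M}$; combining Theorem~\ref{thm:indextheorem} with Theorem~\ref{thm:chTau_vs_ch} applied to the difference gives
\[
	\mathrm{ind}(D_+^{E_i}) = \dim_\tau\!\big(\mathrm{ind}(D_+^{E_i})\big) = \int_M \widehat{A}(M)\,\ch_\tau\!\big([E_{i,+}]-[E_{i,-}]\big) = \int_{\bar{M}}\widehat{A}(\bar{M})\,\big(\ch(\bar{F}_+)-\ch(\bar{F}_-)\big)\ .
\]
Since $\ch$ of a class in $\widetilde{K}^0(S^n)$ is concentrated in degree $n$, the form $\bar{\psi}_i^*(\ch\mathcal{L}_+-\ch\mathcal{L}_-)$ is of pure degree $n = \dim\bar{M}$, so the higher-degree part of $\widehat{A}(\bar{M})$ drops out and
\[
	\mathrm{ind}(D_+^{E_i}) = \int_{\bar{M}}\bar{\psi}_i^*\big(\ch\mathcal{L}_+-\ch\mathcal{L}_-\big) = \deg(\bar{\psi}_i)\cdot\big\langle\ch\mathcal{L}_+-\ch\mathcal{L}_-,[S^n]\big\rangle = \deg(\bar{\psi}_i)\neq 0\ ,
\]
which in particular is a nonzero element of the $K_0(\K) = \Z$ summand.

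I expect the main obstacle to be the construction of the pair $(C_i,E_i)$ and the verification of its formal properties: checking the twisting axioms of Definition~\ref{def:TwistedHilbertA} for the ``projective translation $\otimes$ identity'' structure built from the compatible split $s$, showing that $E_i$ is finitely generated and projective over $C_i$, and establishing that $0\to\K\to C_i\to X_i\to 0$ is split -- exactly the place where one exploits that $\bar{\psi}_i$ is constant outside a compact set, so that the transfer arguments that fail for the possibly non-compact cover $\bar{M}$ are circumvented. The second delicate ingredient is the ``unrolling'' identity relating the $\tau$-Chern character over the compact base $M$ to an honest integral over $\bar{M}$, together with the fact that $[E_{i,+}]-[E_{i,-}]$ is a genuine finite-rank $\K$-class even though $E_i$ itself is infinite-dimensional at infinity; once these are in place the index computation is a routine application of Theorems~\ref{thm:indextheorem} and~\ref{thm:chTau_vs_ch} and the degree formula.
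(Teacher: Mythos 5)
Your overall architecture matches the paper's: pull back a generator of $\widetilde K^0(S^n)$ along the $\tfrac1i$-contracting map, induce over $\pi/\bar\pi$ using the projective representation of Lemma \ref{lem:projSpin} to get a twisted almost flat Hilbert module bundle on $\wtM$, and detect the index by a trace that unrolls into an integral over $\bar M$. But there is a genuine gap at the single most delicate point, namely the construction of $C_i$ and the split extension $0\to\K\to C_i\to X_i\to 0$. You take $C_i=C^*\bigl(\K(H\otimes\C^N)\cup\{u_g\}\bigr)$ and assert that $C^*(\{u_g\})$ meets $\K$ trivially and hence splits the quotient; the justification you offer (that $\bar F$ is trivial outside a compact set) is irrelevant to this algebraic claim. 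The claim is simply false when the cover $\bar M\to M$ is finite (Definition \ref{def:enlargeable} allows this): then $H\otimes\C^N$ is finite dimensional, $\K=B(H\otimes\C^N)$ contains the $u_g$, and there is no such splitting. Even for infinite covers, whether the $C^*$-algebra of the quasi-regular projective representation on $\ell^2(\pi/\bar\pi)$ avoids the compacts is a nontrivial assertion you do not prove. Without the splitting (or at least injectivity of $K_0(\K)\to K_0(C_i)$ together with a retraction), your final statement that the relative index ``is a nonzero element of the $K_0(\K)=\Z$ summand'' is not established: the class $\mathrm{ind}(D_+^{E_{i,+}})-\mathrm{ind}(D_+^{E_{i,-}})$ lies in the image of $K_0(\K)$ in $K_0(C_i)$, and a priori it could die there. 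The paper sidesteps this entirely by a doubling construction: $C_i=\{(c_1,c_2)\in C_{S,T}\times C_{S,T}\mid c_1-c_2\in C_T\}$, with $C_T\cong\K$ after a stabilization trick, so that $0\to C_T\to C_i\to C_{S,T}\to 0$ splits by the diagonal map \emph{by construction}, independently of any relation between the translation algebra and the compacts; moreover the single bundle $E_i$ over the doubled algebra (transition cocycle $(T^1,1)$) encodes the comparison with the trivial bundle internally, so the $K_0(\K)$-part is automatically the relative index.

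A second, smaller issue is the trace argument. The canonical trace on $\K$ is unbounded, so ``detected by the canonical trace of $\K$'' needs the local $C^*$-algebra machinery of Section \ref{sub:Local_and_Twisted}: the paper passes to the dense, holomorphically closed subalgebra $D_i$ of pairs with trace-class difference, on which $\tau(c_1,c_2)=\mathrm{tr}(c_1-c_2)$ is defined, checks $K_0(D_i)\cong K_0(C_i)$, lifts $E_i$ to a twisted $D_i$-module bundle, and only then applies Theorems \ref{thm:indextheorem} and \ref{thm:chTau_vs_ch} to get $\dim_\tau(\mathrm{ind}(D_+^{E_i}))=\int_{\bar M}\wh A(\bar M)\,\ch(\psi^*F-\trivial{\C^n})\neq 0$. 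In your single-algebra setting you would have to manufacture an analogous densely defined relative trace (essentially $c\mapsto \mathrm{tr}(c-s(q(c)))$, which presupposes the splitting $s$ you have not constructed), so this difficulty is not independent of the first one. The unrolling of the $\tau$-Chern character from $M$ to $\bar M$ and the degree-$n$ concentration argument at the end are correct and agree with the paper.
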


\begin{proof}
Let $2n = \dim(M)$ and $\pi = \pi_1(M)$. Since the Chern character is ratio\-nally an isomorphism, there is a vector bundle $F \to S^{2n}$ with non-vanishing top Chern class $c_n(F) \neq 0$. Choose a connection $\eta_F$ on $F$ and fix $i \in \N$. Since $M$ is enlargeable, there exists a spin covering $\bar{M} \to M$ together with a $\frac{1}{i}$-contracting map
\(
	\psi \colon \bar{M} \to S^{2n}\ ,
\)
which is constant outside a compact subset $K$ of $\bar{M}$. Let $P_F$ be the principal $U(n)$-bundle of frames in $F$. Since $\psi$ is constant on $M \backslash K$ we can choose a trivialization for the principal $U(n)$-bundle $\psi^*P_F$ over this set: 
\[
	\left.(\psi^*P_F)\right|_{M \backslash K} \cong (M \backslash K) \times U(n)
\]
such that the pullback connection $\left.\psi^*\eta_F\right|_{M \setminus K}$ is flat. Let $\rho \colon \bar{M} \to M$ be the covering map, $\widetilde{\rho} \colon \wtM \to M$ the universal cover and $\bar{\pi} = \pi_1(\bar{M})$. As described in the proof of \cite[Proposition 1.5]{paper:HankeSchickInfinite} we can cover $M$ by open sets $U_j, j \in I$, such that each component $V_{\lambda, j} \subset \bar{M}$ of $\rho^{-1}(U_j)$ maps diffeomorphically onto $U_j$, intersects only one component $V_{\mu, k}$ of $\rho^{-1}(U_k)$ for any $k$ and such that $\left.\psi^*P_F\right|_{V_{\lambda,j}}$ trivializes. Let $J_j = \pi_0(\rho^{-1}(U_j))$ be the index set labeling the components, likewise set $\widetilde{J}_j = \pi_0(\widetilde{\rho}^{-1}(U_j))$. Let $\widetilde{\varphi}_{\alpha,j} \colon \widetilde{J}_j \to \pi / \bar{\pi}$ be the map that sends $\alpha g$ to $[g] \in \pi / \bar{\pi}$ for $g \in \pi$, where $\pi$ acts on $\widetilde{J}_j$ by deck transformations. Since $\bar{M} = \wtM / \bar{\pi}$, this induces bijections $\varphi_{\lambda,j} \colon J_j \to \pi / \bar{\pi}$ for each $\lambda \in J_j$. Note that 
\begin{equation}\label{eqn:equivariance}
	\varphi_{[\alpha g],j} = g^{-1} \cdot \varphi_{[\alpha],j}\ .
\end{equation}	
Moreover, if $\lambda \in J_j$ and $\mu \in J_k$ belong to components with non-empty intersection, then $\varphi_{\lambda,j}(\kappa) = \varphi_{\mu, k}(\tau)$ if $\tau$ and $\kappa$ intersect. 

Consider the Hilbert space
\(
	H = \ell^2(\pi / \bar{\pi}) \otimes \C^n
\). 
Let $C_S \subset \mathcal{B}(H)$ be the $C^*$-algebra generated by the group of all permutations of $\pi / \bar{\pi}$ \emph{and} all multiplications by functions $f \colon \pi / \bar{\pi} \to S^1$. So we have permutation operators with $S^1$-entries instead of just $1$s as a generating set of $C_S$. Let $C_T \subset \mathcal{B}(H)$ be $C^*$-algebra generated by linear transformations, which are of the form
\[
	T \colon H \to H \quad ; \quad T([g] \otimes v) = [h] \otimes T'v \quad \text {and} \quad \left.T\right|_{([g] \otimes \C^n)^{\perp}} = 0\ .
\]
for some matrix $T' \in M_n(\C)$ and $[g], [h] \in \pi/{\bar{\pi}}$. Let $C_{S,T}$ be the $C^*$-algebra generated by $C_S$ and $C_T$ inside of $\mathcal{B}(H)$ and note that $C_T$  is a $2$-sided ideal in $C_{S,T}$. Moreover, $C_T$ is isomorphic either to the compact operators or to a matrix algebra. Applying the stabilization trick of \cite{paper:HankeSchickInfinite} we can without loss of generality assume that the former is the case. Let $C_i = \left\{ (c_1, c_2) \in C_{S,T} \times C_{S,T} \ |\ c_1 - c_2 \in C_T\right\}$. This algebra fits into a split exact sequence
\(
	0 \to C_T \to C_i \to C_{S,T} \to 0
\)
with the splitting induced by the diagonal map, $C_T \to C_i$  via $a \mapsto (a,0)$ and $C_i \to C_{S,T}$ via $(a,b) \mapsto b$. 

We choose trivializations of $\psi^*P_F$ over the sets $V_{\lambda,j}\subset \bar{M}$, where we take the trivialization fixed above if $V_{\lambda,j}$ is a subset of $\bar{M} \backslash K$. This way we get a cocycle on the double intersections $V_{(\lambda, \mu), (j,k)} = V_{\lambda, j} \cap V_{\mu, k}$:
\[
	T'_{(\lambda, \mu), (j,k)} \colon V_{(\lambda,\mu), (j,k)} \to U(n)\ .
\]
We can extend $T'_{(\lambda, \mu), (j,k)}$ to a cocycle with values in the unitary group $U(C_{S,T})$ as follows:
\begin{align*}
	T^{1}_{(\lambda, \mu), (j,k)}(x)(\varphi_{\lambda,j}(\kappa) \otimes v) & = \varphi_{\mu,k}(\tau) \otimes T'_{(\kappa, \tau), (j,k)}(\bar{x})(v) \ , 
\end{align*}
where $\tau \in \pi / \bar{\pi}$ is the index of the component of $\rho^{-1}(U_k)$ that intersects $V_{\kappa, j}$ and $\bar{x}$ denotes the lift of $\rho(x)$ to the component $V_{\kappa, j}$. This map actually does nothing to the first tensor factor by our previous considerations. Let $T^2_{(\lambda, \mu), (j,k)}$ be the constant map with value $1 \in U(C_{S,T})$. $T'_{(\kappa, \tau), (j,k)}$ is different from the identity only for finitely many pairs $(\kappa, \tau)$. Thus, 
\[
	T_{(\lambda, \mu), (j,k)} \colon V_{(\lambda,\mu), (j,k)} \to U(C_i) \quad ; \quad T_{(\lambda, \mu), (j,k)} = (T^{1}_{(\lambda, \mu),(j,k)},  T^{2}_{(\lambda, \mu),(j,k)})\ .
\]
is a well-defined cocycle with values in $U(C_i)$. We therefore get a smooth Hilbert $C_i$-module bundle $\bar{E}_i \to \bar{M}$, whose pullback to $\wtM$ will be $E_i = \wtM \times_M \bar{E}_i$. By Lemma~\ref{lem:projSpin}, the space $\ell^2(\pi / \bar{\pi})$ carries a projective unitary representation of $\pi$, which induces a projective representation $r \colon \pi \to U(C_i)$. For $\alpha, \beta \in \widetilde{J}_j$  denote the corresponding components of $\widetilde{\rho}^{-1}(U_j)$ by $W_{\alpha,j}$ and $W_{\beta,j}$ respectively. Let $\lambda = [\alpha]$, $\mu = [\beta]$, $\lambda' = [\alpha g^{-1}]$ and $\mu' = [\beta g^{-1}] \in J_j$. We define
\[
	\gamma^g \colon W_{\alpha,j} \times C_i \to W_{\alpha g^{-1}, j} \times C_i
\]
by left multiplication with $r(g)$. Due to equation (\ref{eqn:equivariance}) and with $\varphi_{\lambda, j}(\kappa) = \varphi_{\mu, k}(\tau) = [h] \in \pi / \bar{\pi}$ we have
\begin{align*}
	\left(T_{(\lambda', \mu'),(j,k)}(x) \cdot r(g)\right)\left( \varphi_{\lambda,j}(\kappa) \otimes v\right)  & = c_{\wh{\pi}}(g,h)\,\varphi_{\mu',j}(\tau) \otimes T'_{(\kappa, \tau),(j,k)}(v) \\
	& = \left(r(g) \cdot T_{(\lambda, \mu),(j,k)}(x)\right)\left( \varphi_{\lambda,j}(\kappa) \otimes v\right)\ .
\end{align*}
Thus, $\gamma^g$ intertwines the transition functions of $E_i$ and $g^*(E_i)$. Therefore it yields a well-defined twisting map
\(
	\gamma^g \colon E_i \to g^*(E_i)
\).
This clearly satisfies the Lipschitz condition, since it even is locally constant. 

Let $\eta_{\kappa, j} \in \Omega^1(V_{\kappa,j}, \mathfrak{u}(n))$ be the pullback of $\eta_F$ via the trivialization. These induce forms in $\Omega^1(V_{\lambda,j}, C_{S,T}^a)$, where $C_{S,T}^a$ denotes the anti-selfadjoint operators in $C_{S,T}$, via  
\[
	\left(\eta^{E_i}_{\lambda,j}\right)_x(\xi)(\varphi_{\lambda,j}(\kappa) \otimes v) = \varphi_{\lambda,j}(\kappa) \otimes \left(\eta_{\kappa, j}\right)_x(\xi)\cdot v\ .
\]
Since $\eta_{\kappa,j}$ is non-zero only for finitely many $\kappa$, we can extend $\eta^{E_i}_{\lambda,j}$ to a well-defined $1$-form with values in the anti-selfadjoint operators of $C_i$ by setting it to zero in the second component. These $1$-forms inherit their transformation behaviour from the forms $\eta_{\kappa,j}$. Thus, they yield a $C_i$-linear connection $\nabla^i$ on sections of $E_i$. Just like above it follows from (\ref{eqn:equivariance}) that $\nabla^i$ is a twisted connection. Since the norm of the curvature $\Omega_i$ of $\nabla^i$ coincides with that of $\psi^* \Omega_F$, we have
\[
	\lVert \Omega_i \rVert = \lVert \psi^*\Omega_F \rVert \leq \frac{1}{i}\,C\ .
\]
It remains to be shown that the $K_0(\K)$-part of ${\rm ind}(D_+^{E_i})$ does not vanish. Here we proceed exactly as in \cite{paper:HankeSchickInfinite}: Let $\mcal{T} \subset C_T \cong \K$ be the trace class ideal and let $D_i$ be the algebra given by
\[
	D_i = \left\{ (c_1,c_2) \in C_{S,T} \times C_{S,T} \ | \ c_1 - c_2 \in \mcal{T} \right\}\ .
\]
Since the proof of Lemma \cite[lemma 2.4]{paper:HankeSchickInfinite} applies to $D_i$ with the changed $C_{S,T}$ as well, $D_i$ is a unital local $C^*$-algebra with a trace $\tau(c_1,c_2) = {\rm tr}(c_1 - c_2)$, which coincides with the trace of the element after projecting it from $C_i$ to $\mcal{T}$. Its $C^*$-completion is $C_i$. Since $K_0(C_i) \cong K_0(D_i)$, we can extend $\dim_{\tau}$ from Definition~\ref{def:dimension} to a functional on $K_0(C_i)$ and it suffices to prove that $\dim_{\tau}({\rm ind}(D_+^{E_i})) \neq 0$. The transition functions in the definition of $E_i$ actually take values in $U(D_i)$ and thus lead to a twisted $D_i$-module bundle $\mcal{E}_i$ in the sense of Section \ref{sub:Local_and_Twisted}. By Theorem~\ref{thm:indextheorem} we have
\begin{align*}
	\dim_{\tau}({\rm ind}(D_+^{E_i})) & = \int_M \wh{A}(M) \dim_{\tau}(\ch(E_i))  \\
	& = \int_M \wh{A}(M) \dim_{\tau}(\ch(\mcal{E}_i)) = \int_M \wh{A}(M) \ch_{\tau}(\mcal{E}_i)\ .
\end{align*}
We can identify $\Omega_{\mcal{E}_i} \in \Omega^2(M, \bgend{\mcal{E}_i})$ with an equivariant form in $\Omega^2(\wtM, \Endo{\mcal{E}_i})$. If we carry out the integration over a single subset $U_j \subset M$, we could integrate instead over the subset $W_{\alpha,j} \subset \wtM$ for some $\alpha \in \widetilde{J}_j$. This is independent of the choice of $\alpha$ by equivariance. But over $W_{\alpha,j}$ the form $\left.\tau(\Omega_{\mcal{E}_i} \wedge \dots \wedge \Omega_{\mcal{E}_i})\right|_{W_{\alpha,j}} \in \Omega^{\rm even}(W_{\alpha,j}, \R)$ coincides with the sum of all $\left.\Omega_{\psi^*F} \wedge \dots \wedge \Omega_{\psi^*F}\right|_{V_{\kappa,j}} \in \Omega^{\rm even}(V_{\kappa,j}, \R)$ over $\kappa \in J_j$ by the definition of the trace. Using a partition of unity on $M$ we see that
\[
	\int_M \wh{A}(M) \ch_{\tau}(\mcal{E}_i) = \int_{\bar{M}} \wh{A}(\bar{M}) \ch(\psi^*F - \trivial{\C^n}) \ ,
\]
Since the class of $\ch(\psi^*F - \trivial{\C^n})$ is concentrated in degree $n$ we see that the above term is non-vanishing.
\end{proof}

\begin{remark}\label{rem:ti}
Due to the stabilization trick mentioned in the proof the fibers of $E_i$ are isomorphic to $t_iC_i$ for some projection $t_i \in C_i$, where $t_i = 1$ if $\bar{M}$ is non-compact.
\end{remark}

Having the sequence $E_i$ of almost flat twisted bundles at hand, we can form the $C^*$-algebra
\[
	A = \prod_{i \in \N} C_i 
\]
of bounded sequences with $i$th entry in $C_i$, in which the norm closure of the sequences with only finitely many non-zero entries
\[
	A' = \overline{\bigoplus_{i \in \N} C_i}^{\lVert \cdot \rVert}
\]
is a two-sided ideal and we set $Q = A / A'$. Let $A_i$ be the ideal in $A$ consisting of sequences that are $0$ everywhere, but in the $i$th entry. 

\begin{theorem} \label{thm:AssemblingEi}
There is a smooth twisted Hilbert $A$-module bundle $E \rightarrow \widetilde{M}$ together with a twisted connection 
\[
 \nabla^E : C^\infty(\wtM,E) \rightarrow C^\infty(\wtM, T^*\widetilde{M} \otimes E)
\]
such that the following holds:
\begin{itemize}
 \item $E \cdot A_i$ is isomorphic to $E_i$ as a twisted Hilbert $C_i$-module bundle.
 \item The connection preserves the subbundles $E \cdot A_i$.
 \item The sequence of curvatures $\Omega_{i} \in \Omega^2(M, \bgend{E \cdot A_i})$ of the connection induced on $E \cdot A_i$ by $\nabla^E$ satisfies \( \lim_{i \to \infty} \lVert \Omega_{i} \rVert = 0 \).
\end{itemize}
\end{theorem}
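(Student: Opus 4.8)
The plan is to glue the bundles $E_i$ together ``slot by slot''. Since $A=\prod_{i}C_i$ and each $A_i\cong C_i$, a Hilbert $A$-module bundle over $\wtM$ amounts fibrewise to a uniformly bounded family of Hilbert $C_i$-module bundles, so I would assemble $E$ from the $E_i$ while keeping track of continuity and smoothness. The one non-formal point is that the local gluing data of the $E_i$ must be controlled \emph{uniformly in $i$}. I would extract this uniformity from three facts: the curvature bound $\lVert\Omega_i\rVert\le C/i$ of Theorem~\ref{thm:almostflat}, the local constancy of the twistings established in its proof, and the fact that $\wtM\to M$ covers a closed manifold, so its pulled-back metric has bounded geometry.

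First I would fix $r>0$ below the injectivity radius of $\wtM$ and, by pulling back a finite cover of $M$ by geodesic balls, choose a locally finite $\pi$-equivariant cover $\{B_\alpha\}$ of $\wtM$ by geodesically convex balls of radius $\le r$; crucially this cover does not depend on $i$. For each $i$ and $\alpha$ I would trivialise $E_i|_{B_\alpha}\cong B_\alpha\times V_i$ with $V_i:=t_iC_i$ (Remark~\ref{rem:ti}) by parallel transport of $\nabla_i$ along the radial geodesics of $B_\alpha$. In this radial gauge the connection $1$-form $\eta^i_\alpha\in\Omega^1(B_\alpha,C_i)$ vanishes at the centre and obeys the standard estimate $\lVert\eta^i_\alpha\rVert\le c\,\lVert\Omega_i\rVert\le cC/i$, with $c$ depending only on $r$ and the geometry. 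The transition functions $g^i_{\alpha\beta}\colon B_\alpha\cap B_\beta\to U(V_i)$ between two such gauges are fibrewise unitary, so $\lVert g^i_{\alpha\beta}\rVert=1$, and $\lVert dg^i_{\alpha\beta}\rVert\le 2cC/i$. After also fixing a flat trivialisation of $L_{\wh{\pi}}$ over the fibre products of the $B_\alpha$ and using that $\gamma^g_i$ is fibrewise unitary, locally constant (proof of Theorem~\ref{thm:almostflat}) and parallel for $\nabla_i$, the twisting would read in these gauges as a map $\gamma^g_i\colon B_\alpha\cap g^{-1}B_\alpha\to U(V_i)$ with $\lVert d\gamma^g_i\rVert\le 2cC/i$. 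Mollifying the $\varepsilon$-contracting maps of Definition~\ref{def:enlargeable} at a fixed scale --- harmless by bounded geometry --- makes all higher derivatives of these data $O(1/i)$ as well, so the objects assembled below are genuinely smooth.

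I would then put $V:=\prod_iV_i=tA$ with $t:=(t_i)_i\in A$ and set $g_{\alpha\beta}:=(g^i_{\alpha\beta})_i\colon B_\alpha\cap B_\beta\to U(tAt)$, $\gamma^g:=(\gamma^g_i)_i$, and $\eta_\alpha:=(\eta^i_\alpha)_i$. The estimates of the previous paragraph show that $g_{\alpha\beta}$ and $\gamma^g$ are smooth maps into the unitary group of $A$ and that $\eta_\alpha$ is a bona fide (uniformly bounded) smooth $A$-valued $1$-form; all the algebraic identities --- the cocycle condition for $g_{\alpha\beta}$, compatibility of $\gamma^g$ with the $g_{\alpha\beta}$, the bundle-gerbe-module associativity square for $L_{\wh{\pi}}$, and the transformation rule for the $\eta_\alpha$ on overlaps --- hold because they hold in each slot. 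Hence $g_{\alpha\beta}$ defines a smooth Hilbert $A$-module bundle $E\to\wtM$ with fibre $V$, $\gamma^g$ turns it into a twisted Hilbert $A$-module bundle for $L_{\wh{\pi}}$, and the $\eta_\alpha$ patch to a twisted connection $\nabla^E$. For the three asserted properties I would use that multiplication by the central projection $p_i\in A$ onto the $i$-th factor is an $A$-linear idempotent sending $V$ onto $V_i$, $g_{\alpha\beta}$ to $g^i_{\alpha\beta}$, $\gamma^g$ to $\gamma^g_i$, and $\eta_\alpha$ to $\eta^i_\alpha$; therefore $E\cdot A_i=E\cdot p_iA$, with its induced twisting and connection, is precisely $E_i$ with its twisting $\gamma^g_i$ and connection $\nabla_i$. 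In particular $\nabla^E$ preserves $E\cdot A_i$, the curvature it induces there is $\Omega_i$, and $\lim_{i\to\infty}\lVert\Omega_i\rVert\le\lim_{i\to\infty}C/i=0$.

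The real difficulty lies in the second step: for the assembled transition functions and twistings to be \emph{norm}-continuous (and smooth) maps into the unitary group of $A$, the local structure maps of the individual $E_i$ must be equicontinuous in $i$, which the smoothness of each single $E_i$ does not give. This is exactly where the hypotheses of Theorem~\ref{thm:almostflat} enter: the $i$-independent good cover needs bounded geometry of $\wtM$, and the curvature bound $\lVert\Omega_i\rVert\le C/i$ converts --- in the radial gauge --- into uniform control on the connection forms, the transition functions, and, together with local constancy, the twistings. Once that is in place, every remaining verification is a slotwise check.
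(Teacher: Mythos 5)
Your core strategy is the paper's own (which in turn follows Hanke--Schick): choose an $i$-independent equivariant cover, pass to a parallel-transport (radial/synchronous) gauge so that the curvature bound $\lVert\Omega_i\rVert\le C/i$ of Theorem~\ref{thm:almostflat} yields bounds $\lVert\eta^i_\alpha\rVert\le c\,\lVert\Omega_i\rVert$ on the local connection forms and hence uniform Lipschitz bounds on the transition functions, use the uniform Lipschitz property of the twistings $\gamma^g_i$, assemble the product data slot by slot into a twisted Hilbert $A$-module bundle, and verify the three bullet points by applying the central projections $p_i$. Up to that point your argument matches the paper's proof, which works with trivializations over slices $U_j\times\{1\}$ extended equivariantly by the twisting rather than with geodesic balls; this is only a cosmetic difference.

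The genuine weak point is your treatment of smoothness. The curvature bound controls only first derivatives, and your proposed remedy --- mollify the contracting maps $\bar\psi_\varepsilon$ at a fixed scale so that \emph{all} higher derivatives of the local data become $O(1/i)$ --- is not available inside this proof: the bundles $E_i$ with their connections $\nabla_i$ are fixed inputs coming from Theorem~\ref{thm:almostflat}, and replacing $\psi$ by a mollified map produces \emph{different} bundles, so you would be proving the statement for a modified sequence (and even then you would still owe the non-trivial estimate that higher derivatives of radial-gauge transition functions and of the gauge-transformed twistings are controlled by covariant derivatives of the curvature; nothing in the given data bounds those). The paper avoids all of this: Lipschitz control alone gives a \emph{continuous} twisted Hilbert $A$-module bundle $E_L=\Delta_{\wtM}^*\bigl(\prod_i E_i\bigr)$, and smoothness is then recovered abstractly --- $E_L$ corresponds to a projection in $C(M,\mcal{A})$, which is approximated by a smooth projection to produce $E$ (the isomorphism $E\cdot A_i\cong E_i$ is a priori only continuous and is then smoothed), while $\nabla^E$ is built by extending local $1$-forms equivariantly from the slices, convolving to make them smooth, and patching with a partition of unity on $M$. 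To close your argument you should either adopt this continuous-then-smooth step, or move the mollification into the construction of the $E_i$ themselves and supply the higher-order gauge estimates you are implicitly using.
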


\begin{proof}\label{pf:AssemblingEi}
The idea is to see that the product bundle $E_L = \Delta_{\wtM}^*\left(\prod_{i \in \N} E_i\right)$, where 
\[
	\Delta_{\wtM} \colon \wtM \to \prod_{i \in \N} \wtM
\]
is the diagonal map, has locally Lipschitz continuous transition functions. This parallels the construction given in the proof of \cite[Lemma 2.1]{paper:HankeSchick} with the only difference that we have to work equivariantly over $\wtM$, so we just sketch the differences and refer to \cite{paper:HankeSchick} for the details: We cover $M$ by subsets $U_j$, each of them diffeomorphic to $I^n$, where $I = [0,1]$, such that $\wtM \to M$ is trivial over $U_j$ via 
\[
	\phi_j \colon U_j \times \pi \to \left.\wtM\right|_{U_j}\ .
\]
We can find trivializations 
\[
	\psi^1_{i,j} \colon \left.\phi_j^*E_i\right|_{U_j \times \{1\}} \to I^n \times t_i\,C_i
\]
of $\left.\phi_j^*E_i\right|_{U_j \times \{1\}}$, such that constant sections of $\phi_j^*E_i$ over $I^k \times \{0,\dots,0\}$ are parallel with respect to $\nabla_{\partial_l}$ for $1 \leq l \leq k$, where $\nabla$ denotes the connection induced by $\nabla^{E_i}$. Using the twisting we can extend $\psi^1_{i,j}$ to a trivialization $\psi_{i,j}$ of $\left.\phi_j^*E_i\right|_{U_j \times \pi}$ with components $\psi_{i,j}^g$ with $g \in \pi$. Let $\eta_{i,j}^g \in \Omega^1(I^n, t_i C_i t_i)$ be the pullback of the connection $1$-form of $\nabla^{E_i}$. The way the trivializations are constructed is crucial to prove the estimate given in \cite[Lemma 2.3]{paper:HankeSchick}, which now still holds and we have
\(
	\lVert \eta_{i,j}^g \rVert \leq n \cdot \lVert \Omega_{i,j}^g \rVert
\),
where $\Omega_{i,j}^g$ denotes the curvature of $\eta_{i,j}^g$. The right hand side of the above inequality is independent of $g \in \pi$. Thus, our control of the curvatures carries over to an upper bound on the local connection $1$-forms. The trivializations $\psi_{i,j}$ induce transition maps
\[
	\psi_{i,(j,k)} \colon (U_j \cap U_k) \times \pi \to U(t_i C_i t_i)
\]
and the upper bound on the local connection $1$-forms yields an upper bound on the norm of the derivative $D_{(x,g)} \psi_{i,(j,k)}$ just as described in \cite[Lemma 2.5, Proposition~2.6]{paper:HankeSchick} proving Lipschitz continuity of the transition functions. The Lipschitz condition on the twisting maps ensures that the product of the $\gamma^g_i$ is continuous, when considered as an element in $C(\wtM, \Homom{E_L}{g^*E_L})$. Thus, $E_L$ is a continuous twisted Hilbert $A$-module bundle. Note that $\pi = \pi_1(M)$ acts via the adjoint action unitarily on $C_i$ and we set $\mcal{A} = \wtM \times_{\rm Ad} C_i$. $E_L$ corresponds to a projection $t_L \in C(M, \mcal{A})$, which we can approximate by a projection in $C^\infty(M, \mcal{A})$ to obtain a smooth twisted Hilbert $A$-module bundle $E$ by the construction given in Theorem~\ref{thm:Twisted_K}. We have $E \cdot A_i \cong E_L \cdot A_i = E_i$. The isomorphism may only be continuous, but it can be smoothed. 

To construct the connection $\nabla^E$ we only need to give local connection $1$-forms over the sets $\phi_j(U_j \times \{1\}) \subset \wtM$ and extend them equivariantly via $\gamma^g$ to get connection forms over the images of $U_j \times \pi$, which can be patched together with a partition of unity on $M$. The construction takes the local forms of the $E_i$ and uses a convolution argument to get a smooth form on the product. This is exactly the same as in \cite{paper:HankeSchick}. 
\end{proof}

The twisting $\gamma^g$ maps the subbundle $E \cdot A'$ into itself, therefore the quotient $\mcal{W} = E / (E \cdot A')$ is a smooth twisted Hilbert $Q$-module bundle equipped with a \emph{flat} connection $\nabla^Q$ and typical fiber $tQ$ for some projection $t \in Q$. If we now fix a basepoint $\widetilde{m} \in \wtM$, we get a projective holonomy representation in the sense of Definition \ref{def:projhol}:
\(
	(\pi, c_{\wh{\pi}}) \to \Endo{\mcal{W}_{\widetilde{m}}} = tQt
\).
By the universal property of the maximal twisted group $C^*$-algebra, this extends to a $*$-homomorphism
\(
	\phi \colon \Cmax{\wh{\pi}}{\pi} \to Q
\).

As a consequence of Corollary \ref{cor:naturality}, the induced map $\phi_* \colon K_0(\Cmax{\wh{\pi}}{\pi}) \to K_0(Q)$ maps $\theta^{\rm max}(M)$ to ${\rm ind}(D_+^{\mcal{W}'})$, where 
\[
	\mcal{W}' = \wtM \times tQ = \wtM \times \mcal{W}_{\widetilde{m}}\ .
\]
Using parallel transport, its equivariance as described before Definition \ref{def:projhol} with respect to $\gamma^g$ and flatness of $Q$ we see that $\mcal{W}'$ is isomorphic to $\mcal{W}$ as a twisted Hilbert $Q$-module bundle. 

\begin{theorem}\label{thm:theta_max_enl}
Let $M$ be a closed compact smooth orientable even-dimensional manifold with $\dim(M) \geq 3$ and $\wtM$ spin that is enlargeable in the sense of Definition~\ref{def:enlargeable}. Then we have
\[
	\theta^{\rm max}(M) \neq 0 \in K_0(\Cmax{\wh{\pi}}{\pi})\ .
\]
\end{theorem}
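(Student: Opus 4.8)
The plan is to run the sequence of almost flat twisted bundles $E_i \to \wtM$ of Theorem~\ref{thm:almostflat}, assembled into one bundle $E \to \wtM$ as in Theorem~\ref{thm:AssemblingEi}, through the naturality of the Mishchenko--Fomenko index (Corollary~\ref{cor:naturality}) and to extract non-vanishing from the coordinate projections of $A = \prod_{i \in \N} C_i$. Recall that from $E$ we have already obtained the \emph{flat} twisted Hilbert $Q$-module bundle $\mcal{W} = E/(E \cdot A')$ over $Q = A/A'$, together with the $*$-homomorphism $\phi \colon \Cmax{\wh{\pi}}{\pi} \to Q$ coming from its projective holonomy, and that $\phi_*\big(\theta^{\rm max}(M)\big) = {\rm ind}(D_+^{\mcal{W}'}) = {\rm ind}(D_+^{\mcal{W}})$ because $\mcal{W}' \cong \mcal{W}$. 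Hence it suffices to prove that ${\rm ind}(D_+^{\mcal{W}}) \neq 0$ in $K_0(Q)$.

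The key computation concerns the image of the index class ${\rm ind}(D_+^E) \in K_0(A)$ under the structure maps of $A$. For the $i$-th coordinate projection $p_i \colon A \to C_i$ one has $E \otimes_{p_i} C_i \cong E \cdot A_i \cong E_i$, so Corollary~\ref{cor:naturality} gives $(p_i)_*\big({\rm ind}(D_+^E)\big) = {\rm ind}(D_+^{E_i})$; by Theorem~\ref{thm:almostflat} this class has non-zero $K_0(\K)$-component and is therefore non-zero for \emph{every} $i$. For the quotient $q \colon A \to Q$ one has $E \otimes_q Q \cong \mcal{W}$, so $q_*\big({\rm ind}(D_+^E)\big) = {\rm ind}(D_+^{\mcal{W}})$. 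Now $A' = \overline{\bigoplus_{i} C_i}$ is the $C_0$-direct sum, so $K_0(A') \cong \bigoplus_i K_0(C_i)$, and under this identification the composition $K_0(A') \to K_0(A) \xrightarrow{(p_i)_*} K_0(C_i)$ is the projection onto the $i$-th summand. Consequently any element in the image of $K_0(A') \to K_0(A)$ has vanishing $(p_i)_*$-component for all but finitely many $i$. Since the $(p_i)_*$-components ${\rm ind}(D_+^{E_i})$ of ${\rm ind}(D_+^E)$ are non-zero for all $i$, the class ${\rm ind}(D_+^E)$ is not in the image of $K_0(A') \to K_0(A)$. Exactness of $K_0(A') \to K_0(A) \xrightarrow{q_*} K_0(Q)$ then yields $q_*\big({\rm ind}(D_+^E)\big) = {\rm ind}(D_+^{\mcal{W}}) \neq 0$, and hence $\theta^{\rm max}(M) \neq 0$ in $K_0(\Cmax{\wh{\pi}}{\pi})$.

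I expect the genuine difficulty of the theorem to lie not in this final assembly but in the two preceding constructions: producing the almost flat twisted bundles $E_i$ with $\lVert \Omega_i \rVert \leq C/i$ and non-trivial $K_0(\K)$-index (Theorem~\ref{thm:almostflat}, which is where enlargeability and the permutation-operator algebras $C_{S,T}$ enter), and splicing them into a bundle $E$ over $\wtM$ with locally Lipschitz twisting so that the curvatures of the induced connections tend to zero --- this decay being exactly what makes the connection on $\mcal{W}$ flat and hence produces $\phi$ (Theorem~\ref{thm:AssemblingEi}). Within the present step the only points needing attention are: that ${\rm ind}(D_+^E)$ makes sense, which requires $E$ finitely generated projective and $dd(L_{\wh{\pi}}) = dd(\Lspin) = W_3(M)$ so that Theorem~\ref{thm:KKproduct} applies; that Corollary~\ref{cor:naturality} applies verbatim to the non-injective $*$-homomorphisms $p_i$ and $q$, which it does since it only uses functoriality of the Kasparov product; and the standard identification $K_0(A') \cong \bigoplus_i K_0(C_i)$ with its coordinate compatibility, obtained from $A' = \varinjlim_N (C_1 \oplus \dots \oplus C_N)$.
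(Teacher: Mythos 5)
Your proposal is correct, and up to the final step it runs along the same track as the paper's own proof: both reduce, via the flat quotient bundle $\mcal{W} = E/(E\cdot A')$ and its projective holonomy homomorphism $\phi \colon \Cmax{\wh{\pi}}{\pi} \to Q$, to showing ${\rm ind}(D_+^{\mcal{W}}) \neq 0$ in $K_0(Q)$, and both detect this through the coordinate indices ${\rm ind}(D_+^{E_i})$, which are non-zero by Theorem~\ref{thm:almostflat}. Where you genuinely differ is in the extraction of non-vanishing: the paper cites the result from Hanke--Schick that $K_0(Q)$ splits off a summand $\prod_{i} K_0(\K)/\bigoplus_{i} K_0(\K) \cong \prod_i \Z / \bigoplus_i \Z$ and identifies the image of ${\rm ind}(D_+^{\mcal{W}})$ there with the sequence of $K_0(\K)$-parts $\left[p_*\bigl({\rm ind}(D_+^{E_i})\bigr)\right]$, whereas you work upstairs with ${\rm ind}(D_+^{E}) \in K_0(A)$, apply Corollary~\ref{cor:naturality} to the coordinate projections $p_i$ and to $q \colon A \to Q$ (using $E \otimes_{p_i} C_i \cong E\cdot A_i \cong E_i$ and $E \otimes_q Q \cong \mcal{W}$ from Theorem~\ref{thm:AssemblingEi}), and then combine $K_0(A') \cong \bigoplus_i K_0(C_i)$ (continuity of $K_0$) with half-exactness of $K_0$ for $0 \to A' \to A \to Q \to 0$. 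Your variant is somewhat more self-contained: it needs only the non-vanishing ${\rm ind}(D_+^{E_i}) \neq 0$ and no structural statement about $K_0(Q)$, while the paper's route yields a little more, namely an explicit non-zero invariant of $\phi_*(\theta^{\rm max}(M))$ in $\prod_i \Z/\bigoplus_i \Z$ recorded by the integers $p_*({\rm ind}(D_+^{E_i}))$. Your side remarks are also in order, up to one harmless imprecision: the common Dixmier--Douady class of $L_{\wh{\pi}}$ and $\Lspin$ is $w_2(M) \in H^2(M;\Z/2\Z)$ (its Bockstein $W_3(M)$ is the class of the associated complex twist), but nothing in your argument depends on this.
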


\begin{proof}\label{pf:theta_max_enl}
As we saw above, we have
\(
	\phi_*(\theta^{\rm max}(M)) = {\rm ind}(D_+^{\mcal{W}}) \in K_0(Q)
\)
By \cite{paper:HankeSchickInfinite}, the group $K_0(Q)$ splits off a summand 
\[
	\prod_{i \in \N} K_0(\K) / \bigoplus_{i \in \N} K_0(\K) \cong \prod_{i \in \N} \Z / \bigoplus_{i \in \N} \Z
\]
and the image of ${\rm ind}(D_+^{\mcal{W}})$ in the latter group corresponds to the sequence
\[
	z_i = \left[p_*\left({\rm ind}(D_+^{E \cdot A_i})\right)\right] = \left[p_*\left({\rm ind}(D_+^{E_i})\right)\right]\ ,
\]
where $p \colon C_i \to \K$ is the projection. By Theorem \ref{thm:almostflat} it has only non-vanishing entries.
\end{proof}

\begin{remark}\label{rem:releasing_restrictions}
Extending the suspension argument from \cite{paper:HankeSchick} it is easy to drop the assumption about even-dimensionality. Relaxing the condition about the orientability of $M$ requires incorporating orientation twists of $K$-theory into the setup, which can be seen as a special case of twisted $\Z/2\Z$-equivariant $K$-theory as has been observed by Karoubi \cite[Remark 6.16]{paper:KaroubiOldAndNew}, \cite{paper:KaroubiClifford}. These can also be described by gerbes (see the Jandl gerbes in \cite{paper:SurfaceHolonomy} and the functor $K_{\pm}(X)$ in \cite{paper:Z2twists}, which is naturally equivalent with Karoubi's definition), therefore the above argument should generalize to non-orientable manifolds as well. Nevertheless, it seems to be impossible to drop the spin condition for the covers $\bar{M}$ in the definition of enlargeability, since our construction of a projective representation with the right cocycle relies on that.
\end{remark}

\bibliographystyle{plain} 
\bibliography{TwistedAndPSC}

\def\cprime{$'$}
\begin{thebibliography}{10}

\bibitem{paper:Z2twists}
Michael Atiyah and Michael Hopkins.
\newblock A variant of {$K$}-theory: {$K_\pm$}.
\newblock In {\em Topology, geometry and quantum field theory}, volume 308 of
  {\em London Math. Soc. Lecture Note Ser.}, pages 5--17. Cambridge Univ.
  Press, Cambridge, 2004.

\bibitem{paper:AtiyahSegal1}
Michael Atiyah and Graeme Segal.
\newblock Twisted {$K$}-theory.
\newblock {\em Ukr. Mat. Visn.}, 1(3):287--330, 2004.

\bibitem{book:Blackadar}
Bruce Blackadar.
\newblock {\em {$K$}-theory for operator algebras}, volume~5 of {\em
  Mathematical Sciences Research Institute Publications}.
\newblock Cambridge University Press, Cambridge, second edition, 1998.

\bibitem{paper:KTheoryBGM}
Peter Bouwknegt, Alan~L. Carey, Varghese Mathai, Michael~K. Murray, and Danny
  Stevenson.
\newblock Twisted {$K$}-theory and {$K$}-theory of bundle gerbes.
\newblock {\em Comm. Math. Phys.}, 228(1):17--45, 2002.

\bibitem{paper:BusbySmith}
Robert~C. Busby and Harvey~A. Smith.
\newblock Representations of twisted group algebras.
\newblock {\em Trans. Amer. Math. Soc.}, 149:503--537, 1970.

\bibitem{paper:Wang}
Alan~L. Carey and Bai-Ling Wang.
\newblock Thom isomorphism and push-forward map in twisted {$K$}-theory.
\newblock {\em J. K-Theory}, 1(2):357--393, 2008.

\bibitem{paper:DonovanKaroubi}
P.~Donovan and M.~Karoubi.
\newblock Graded {B}rauer groups and {$K$}-theory with local coefficients.
\newblock {\em Inst. Hautes \'Etudes Sci. Publ. Math.}, (38):5--25, 1970.

\bibitem{paper:SurfaceHolonomy}
J{\"u}rgen Fuchs, Thomas Nikolaus, Christoph Schweigert, and Konrad Waldorf.
\newblock Bundle gerbes and surface holonomy.
\newblock In {\em European {C}ongress of {M}athematics}, pages 167--195. Eur.
  Math. Soc., Z\"urich, 2010.

\bibitem{paper:GromovLawsonFund}
Mikhael Gromov and H.~Blaine Lawson, Jr.
\newblock Spin and scalar curvature in the presence of a fundamental group.
  {I}.
\newblock {\em Ann. of Math. (2)}, 111(2):209--230, 1980.

\bibitem{paper:GromovLawsonIHES83}
Mikhael Gromov and H.~Blaine Lawson, Jr.
\newblock Positive scalar curvature and the {D}irac operator on complete
  {R}iemannian manifolds.
\newblock {\em Inst. Hautes \'Etudes Sci. Publ. Math.}, (58):83--196 (1984),
  1983.

\bibitem{paper:HankeSchick}
Bernhard Hanke and Thomas Schick.
\newblock Enlargeability and index theory.
\newblock {\em J. Differential Geom.}, 74(2):293--320, 2006.

\bibitem{paper:HankeSchickInfinite}
Bernhard Hanke and Thomas Schick.
\newblock Enlargeability and index theory: infinite covers.
\newblock {\em $K$-Theory}, 38(1):23--33, 2007.

\bibitem{paper:KaroubiClifford}
Max Karoubi.
\newblock Alg\`ebres de {C}lifford et {$K$}-th\'eorie.
\newblock {\em Ann. Sci. \'Ecole Norm. Sup. (4)}, 1:161--270, 1968.

\bibitem{paper:KaroubiOldAndNew}
Max Karoubi.
\newblock Twisted {$K$}-theory---old and new.
\newblock In {\em {$K$}-theory and noncommutative geometry}, EMS Ser. Congr.
  Rep., pages 117--149. Eur. Math. Soc., Z\"urich, 2008.

\bibitem{book:Lance}
E.~C. Lance.
\newblock {\em Hilbert {$C\sp *$}-modules}, volume 210 of {\em London
  Mathematical Society Lecture Note Series}.
\newblock Cambridge University Press, Cambridge, 1995.
\newblock A toolkit for operator algebraists.

\bibitem{book:LawsonMichelsohn}
H.~Blaine Lawson, Jr. and Marie-Louise Michelsohn.
\newblock {\em Spin geometry}, volume~38 of {\em Princeton Mathematical
  Series}.
\newblock Princeton University Press, Princeton, NJ, 1989.

\bibitem{paper:MS_index_thm}
A.~S. Mi{\v{s}}{\v{c}}enko and A.~T. Fomenko.
\newblock The index of elliptic operators over {$C^{\ast} $}-algebras.
\newblock {\em Izv. Akad. Nauk SSSR Ser. Mat.}, 43(4):831--859, 967, 1979.

\bibitem{paper:BundleGerbes}
M.~K. Murray.
\newblock Bundle gerbes.
\newblock {\em J. London Math. Soc. (2)}, 54(2):403--416, 1996.

\bibitem{paper:IntroToBGs}
Michael~K. Murray.
\newblock An introduction to bundle gerbes.
\newblock In {\em The many facets of geometry}, pages 237--260. Oxford Univ.
  Press, Oxford, 2010.

\bibitem{paper:GerbesAndIndexThms}
Michael~K. Murray and Michael~A. Singer.
\newblock Gerbes, {C}lifford modules and the index theorem.
\newblock {\em Ann. Global Anal. Geom.}, 26(4):355--367, 2004.

\bibitem{paper:RosenbergPositiveIII}
Jonathan Rosenberg.
\newblock {$C\sp \ast$}-algebras, positive scalar curvature, and the {N}ovikov
  conjecture. {III}.
\newblock {\em Topology}, 25(3):319--336, 1986.

\bibitem{paper:RosenbergProgressRep}
Jonathan Rosenberg.
\newblock Manifolds of positive scalar curvature: a progress report.
\newblock In {\em Surveys in differential geometry. {V}ol. {XI}}, volume~11 of
  {\em Surv. Differ. Geom.}, pages 259--294. Int. Press, Somerville, MA, 2007.

\bibitem{paper:SchickCounter}
Thomas Schick.
\newblock A counterexample to the (unstable) {G}romov-{L}awson-{R}osenberg
  conjecture.
\newblock {\em Topology}, 37(6):1165--1168, 1998.

\bibitem{paper:SchickKKConnections}
Thomas Schick.
\newblock {$L\sp 2$}-index theorems, {$KK$}-theory, and connections.
\newblock {\em New York J. Math.}, 11:387--443 (electronic), 2005.

\bibitem{paper:StolzConcordance}
Stephan Stolz.
\newblock Concordance classes of positive scalar curvature metrics.
\newblock {\em preprint}.

\bibitem{paper:StolzSimplyConn}
Stephan Stolz.
\newblock Simply connected manifolds of positive scalar curvature.
\newblock {\em Ann. of Math. (2)}, 136(3):511--540, 1992.

\bibitem{paper:Waldorf}
Konrad Waldorf.
\newblock More morphisms between bundle gerbes.
\newblock {\em Theory Appl. Categ.}, 18:No. 9, 240--273, 2007.

\end{thebibliography}

\end{document}